\def\R{\mathbb{R}}
\def\C{\mathbb{C}}
\def\N{\mathbb{N}}
\def\BB{\mathrm{B}}
\def\MM{\mathrm{M}}
\def\E{\mathrm{E}}
\def\im{\mathop{\mathrm{Im}}}
\newtheorem{theorem}{Theorem}[section]
\newtheorem{lemma}[theorem]{Lemma}
\newtheorem{corollary}[theorem]{Corollary}
\newtheorem{proposition}[theorem]{Proposition}
\theoremstyle{remark}
\newtheorem{remark}[theorem]{Remark}
\theoremstyle{definition}
\newtheorem{definition}[theorem]{Definition}
\newtheorem{example}[theorem]{Example}
\theoremstyle{question}
\newtheorem{question}[theorem]{Question}
\numberwithin{equation}{section}
\begin{document}

\title[]{Derivations which are inner as completely bounded maps}
\author{Ilja Gogi\'c}
\address{Department of Mathematics\\ University of Zagreb\\
Bijeni\v{c}ka cesta 30\\ Zagreb 10000\\ Croatia}
\email{ilja@math.hr}

\thanks{The author wishes to thank Professors Bojan Magajna and Douglas W. B.
Somerset for their comments and suggestions.}
\keywords{$C^*$-algebra, derivation, completely bounded map}

\subjclass[2000]{Primary 46L07; Secondary 46L57}

\begin{abstract} We consider derivations from the image of the canonical
contraction $\theta_A$ from the Haagerup tensor product of a $C^*$-algebra $A$
with itself to the space of completely bounded maps on $A$. We show that such
derivations are necessarily inner if $A$ is prime  or if $A$ is
quasicentral with Hausdorff primitive spectrum. We also provide an
example of a $C^*$-algebra which has outer elementary derivations.
\end{abstract}

\maketitle

\section{Introduction}

Let $A$ be a $C^*$-algebra and let ICB($A$) be the space of all completely
bounded maps $T: A \to A$ such that $T(J) \subseteq J$, for every closed
two-sided ideal $J$ of $A$. If $A \otimes_h A$ denotes the Haagerup tensor
product of $A$ with itself, there is a canonical contraction $\theta_A : A
\otimes_h A \to \mathrm{ICB}(A)$ given on elementary tensors $a \otimes b \in A
\otimes A$ by
$$\theta_A(a \otimes b)(x):=axb, \quad \mathrm{for} \ \mathrm{all} \ x \in A.$$
Mathieu showed that $\theta_A$ is isometric if and only if $A$ is a prime
$C^*$-algebra (see \cite[5.4.11]{Ara}).
If $A$ is not prime then $\theta_A$ is not even injective, and then it is natural
to consider the central Haagerup tensor product
$A \otimes_{Z,h}A$, and the induced contraction $\theta^Z_{A} : A \otimes_{Z,h} A
\to \mathrm{ICB}(A)$ (see \cite{Som2}, \cite{AST} and \cite{AST1} for the
further details and results in this subject).

Since every derivation on a $C^*$-algebra $A$ is an operator in
$\mathrm{ICB}(A)$, it is natural to study how large can the set
$\mathrm{Der}(A) \cap \im \theta_A$ be (where $\im \theta_A$ denotes the image of
$\theta_A$ and $\mathrm{Der}(A)$ the space of all derivations on $A$).
To ensure that at least all the inner derivations on $A$ are in $\im
\theta_A$ (A is not necessarily unital), we shall require that $A$ is quasicentral (see section 3). In this paper we shall be mainly interested in the question when is
the set $\mathrm{Der}(A) \cap \im \theta_A$ as small as possible, and hence (in
the quasicentral case) equal to the set $\mathrm{Inn}(A)$ of all inner derivations
on $A$. This is certainly true for all von Neumann algebras (since by Kadison-Sakai theorem \cite[4.1.6]{Sak}, every derivation
on a von Neumann algebra is inner). As we shall see, this
property is also satisfied for the class of
all unital prime $C^*$-algebras and for the class of all quasicentral $C^*$-algebras with Hausdorff
primitive spectrum. We also conjecture that this property holds for the
larger class of all quasicentral $C^*$-algebras in which every Glimm ideal is primal, but we
were not able to prove this.

The paper is organized as follows. In section 3 we provide some basic facts
about quasicentral $C^*$-algebras.

In Section 4, we concentrate on prime $C^*$-algebras. We show that every
derivation $\delta \in \im \theta_A$ on a unital prime $C^*$-algebra $A$ is
necessarily inner in $A$. If a prime $C^*$-algebra $A$ is non-unital (and hence
non-quasicentral) we show that the only derivation $\delta \in \im \theta_A$ is in
fact the zero-derivation.

In Section 5, we concentrate on $C^*$-algebras with Hausdorff primitive
spectrum. We show that every derivation $\delta \in \im \theta_A$ is smooth (see
definition \ref{smooth}), and hence inner in its multiplier algebra $M(A)$. If
$A$ is also quasicentral, we prove that every derivation $\delta \in \im \theta_A$
is in fact inner in $A$. We also show that a quasicentral $C^*$-algebra $A$ has a
Hausdorff primitive spectrum if and only if every inner derivation on $A$ is
smooth.

In Section 6, we give an example of a unital separable $2$-subhomogeneous
$C^*$-algebra $A$ for which the space of elementary operators $\E(A)$
is a closed subspace of $\mathrm{ICB}(A)$ (and hence $\im \theta_A=\E(A)$), but
for which the space of inner derivations is not closed in $\mathrm{Der}(A)$. It
follows that such $C^*$-algebra must have outer elementary derivations.

\section{Notation and Preliminaries}
 Through this paper $A$ will denote a $C^*$-algebra, $A_+$ the positive part and $A_h$ the self-adjoint part of $A$. By $Z(A)$ we denote the center of $A$. By an
ideal of $A$ we shall always mean a closed two-sided ideal. The set of all
ideals of $A$ is denoted by $\mathrm{Id}(A)$. By $\hat{A}$ we shall denote the
spectrum of $A$ (i.e. the set of all equivalence classes of irreducible
representations of $A$) and by
$\mathrm{Prim}(A)$ the primitive spectrum of $A$ (i.e. the set of all primitive
ideals of $A$), equipped with the Jacobson topology. By $M(A)$ we denote the multiplier algebra of $A$ and by $\tilde{A}$ we denote the (minimal) unitization of $A$ (whether $A$ is unital or not).
\medskip

We now recall some properties about the complete regularization of Prim($A$)
(see \cite{ArcSom1} for further details). For $P, Q \in \mathrm{Prim}(A)$ let
\begin{equation}\label{approx}
P \approx Q \ \ \mathrm{if }\ \ f(P)=f(Q), \ \mathrm{for} \ \mathrm{all} \ f \in
C_b(\mathrm{Prim}(A)).
\end{equation}

Then $\approx$ is an equivalence relation on Prim($A$) and the equivalence
classes are closed subsets of Prim$(A)$. Hence the map
$$\mathrm{Prim}(A)/\approx \to \mathrm{Id}(A), \quad [P]\mapsto \bigcap[P]$$ is
an injection; here $[P]$ denotes the equivalence class of $P \in
\mathrm{Prim}(A)$. The image of this map is denoted by Glimm$(A)$, and its
elements are called \textit{Glimm ideals} of $A$. We equip Glimm($A$) with the
weakest topology that makes all functions $f \in C_b(\mathrm{Prim}(A))$
continuous (when dropped canonically to Glimm$(A)$). Then Glimm$(A)$ becomes a
completely regular (Hausdorff) space called the \textit{complete regularization}
of Prim($A$), and the quotient map $\phi_A : \mathrm{Prim}(A) \to
\mathrm{Glimm}(A)$ is known as \textit{complete regularization map}.
\medskip

A \textit{derivation} on a $C^*$-algebra $A$ is a linear map $\delta : A \to A$
satisfying the \textit{Leibniz rule}
\begin{equation}\label{Leibniz}
\delta(ab)=\delta(a)b + a \delta(b), \quad \mathrm{for} \ \mathrm{all} \ a,b \in
A.
\end{equation}
The \textit{inner derivation} implemented by the element $a \in A$ is a map
$\delta_a : A \to A$, given by $$\delta_a (x):=ax-xa, \quad \mathrm{for }\
\mathrm{all} \  x \in A.$$
If a derivation $\delta \in \mathrm{Der}(A)$ is not inner, we say that $\delta$ is \textit{outer}.
By Der($A$) and Inn($A$) we respectfully denote the set of all derivations on $A$ and the set of all inner derivations on $A$.
It is well known that
$\mathrm{Der}(A) \subseteq \mathrm{ICB}(A)$ and
that
$$
\|\delta\|_{cb}=\|\delta\|=\sup \{\|\delta_P\| : \ P \in \mathrm{Prim}(A) \},
$$
where $\delta_J$ ($J \in \mathrm{Id}(A)$) denotes the induced derivation
on $A/J$; $$\delta_J(x+J)=\delta(x)+J \ \ (x \in A).$$
When $A$ is a primitive and unital $C^*$-algebra, $a \in A$ and $\lambda(a)$ the nearest scalar to $a$ (i.e. $\|a- \lambda(a)\|=d(a,\C)$), by Stampfli's formula \cite[4.1.17]{Ara} we have
\begin{equation}\label{Stampfli}
\|\delta_a\|_{cb}=\|\delta_a\|=2\|a-\lambda(a)\|.
\end{equation}

\section{Quasicentral $C^*$-algebras}

\begin{definition}
A $C^*$-algebra $A$ is said to be \textit{quasicentral} if no primitive ideal of
$A$ contains $Z(A)$. This is equivalent to the condition that no Glimm ideal of $A$ contains $Z(A)$.
\end{definition}
 The next proposition gives a useful characterization of quasicentral $C^*$-algebras:
\begin{proposition}\label{qc} Let $A$ be a $C^*$-algebra. The following
conditions are equivalent:
\begin{itemize}
\item[(i)] $A$ is quasicentral,
\item[(ii)] $A$ has a central approximate unit\footnote{We say that an approximate unit $(e_\alpha)$ of $A$ is central if $e_\alpha \in Z(A)$ for each $\alpha$.},
\item[(iii)] $A=Z(A)A$,
\item[(iv)] $A$ is Glimm ideal of $\tilde{A}$.
\end{itemize}
\end{proposition}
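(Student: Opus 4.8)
The plan is to prove Proposition~\ref{qc} by establishing the cycle of implications (ii) $\Rightarrow$ (iii) $\Rightarrow$ (i) $\Rightarrow$ (iv) $\Rightarrow$ (ii), with the equivalence of (i) and (ii) being the conceptual heart of the argument. For (ii) $\Rightarrow$ (iii): if $(e_\alpha)$ is a central approximate unit then for every $a \in A$ we have $e_\alpha a \to a$, and $e_\alpha a \in Z(A)A$, so $a \in \overline{Z(A)A}$; since $Z(A)A$ is easily seen to be a (closed) ideal once we know it is all of $A$, the cleaner route is to note $Z(A)A$ is a linear subspace containing every $a$ as a limit of its own elements, whence $A = \overline{Z(A)A} = Z(A)A$ (using Cohen factorization, or simply that the containment $\overline{Z(A)A}=A$ already forces equality after observing $Z(A)A$ is norm-dense and is itself a $C^*$-subalgebra-module that is closed — I would spell this out via Cohen's factorization theorem applied to the Banach $Z(\tilde A)$-module $A$).

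For (iii) $\Rightarrow$ (i): suppose $A = Z(A)A$ but some primitive ideal $P$ contains $Z(A)$. Then $Z(A)A \subseteq PA \subseteq P$ (as $P$ is an ideal), forcing $A = P$, contradicting that $P$ is a proper ideal. For (i) $\Rightarrow$ (iv): recall $A$ is an ideal of $\tilde A$, hence a union of Glimm ideals of $\tilde A$ is not quite right — rather, $A$ is contained in some Glimm ideal $G$ of $\tilde A$ precisely because $A$, being a proper ideal, is contained in the intersection of the primitive ideals of $\tilde A$ not meeting... Here I would argue: the primitive ideals of $\tilde A$ are those of $A$ (pulled back) together with the single ideal $A$ itself (when $A$ is non-unital); $A$ is quasicentral exactly when $A$ is $\approx$-equivalent to no other primitive ideal, i.e. $\{A\}$ is its own $\approx$-class, which says precisely that $A = \bigcap[A]$ is a Glimm ideal of $\tilde A$. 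Conversely (iv) $\Rightarrow$ (ii): if $A$ is a Glimm ideal of $\tilde A$, then $\tilde A / A \cong \C$ is in the Glimm (completely regular) quotient, and one extracts a central approximate unit of $A$ by a standard argument: $A$ has an approximate unit, and quasicentrality (which we get from (iv) via the already-proven chain, or directly) lets us average it into the center using that $Z(\tilde A)$ separates $A$ from the point $A \in \mathrm{Glimm}(\tilde A)$.

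The main obstacle I anticipate is the construction of a \emph{central} approximate unit from quasicentrality — step (i) $\Rightarrow$ (ii), or equivalently (iv) $\Rightarrow$ (ii). The naive approximate unit need not be central, and one must genuinely use that $Z(A)$ is not contained in any primitive ideal: by the characterization of quasicentrality via Glimm ideals, for each primitive ideal $P$ there is a central positive element not in $P$, and a partition-of-unity / patching argument over the (locally compact Hausdorff) complete regularization $\mathrm{Glimm}(A)$ produces central positive contractions that act approximately as a unit on norm-compact subsets of $A$. Making this rigorous requires care with the topology on $\mathrm{Glimm}(A)$ and uniform control; I would cite or adapt the standard treatment (e.g. from \cite{ArcSom1} or the literature on quasicentral algebras) rather than reproduce it in full. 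The implications not involving the construction of central units are all short algebraic or topological observations.
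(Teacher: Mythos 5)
Your argument is correct in substance, and, like the paper, it ultimately outsources the only genuinely hard implication --- that quasicentrality yields a \emph{central} approximate unit --- to the literature (this is exactly Archbold's density theorem, \cite[Thm.~1]{Arc1}, which is what the paper cites for (i)$\Leftrightarrow$(ii)). Where you differ is in how condition (iv) is attached to the rest: the paper links (iv) directly to the algebraic condition (iii), quoting from \cite{ArcSom1} that the Glimm ideals of the unital algebra $\tilde{A}$ are exactly the ideals $J\tilde{A}$ with $J$ a maximal ideal of $Z(\tilde{A})$, and observing that for the ideal $A$ the relevant $J$ must be $Z(A)$, so $A\in\mathrm{Glimm}(\tilde{A})\Leftrightarrow A=Z(A)A$. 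You instead link (iv) to the spectral condition (i), arguing that $A$ is a Glimm ideal of $\tilde{A}$ iff $\{A\}$ is its own $\approx$-class in $\mathrm{Prim}(\tilde{A})$, iff no $P^{\sim}$ with $P\in\mathrm{Prim}(A)$ has $P^{\sim}\cap Z(\tilde{A})=Z(A)$, iff no $P$ contains $Z(A)$. That route is fine, but to make it airtight you should say explicitly that for the unital algebra $\tilde{A}$ one has $P\approx Q$ iff $P\cap Z(\tilde{A})=Q\cap Z(\tilde{A})$ (Dauns--Hofmann; this is the unital case of the paper's Lemma 3.6 / the fact recorded after it from \cite{ArcSom1}), and that any $\approx$-class whose intersection equals the maximal ideal $A$ must reduce to $\{A\}$. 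Your extra implication (iii)$\Rightarrow$(i) by the containment $Z(A)A\subseteq P$ is a nice elementary shortcut the paper does not need. Two small corrections: in (ii)$\Rightarrow$(iii), Cohen's factorization theorem \cite[A.6.2]{Ble} must be applied to $A$ as a nondegenerate Banach $Z(A)$-module (not $Z(\tilde{A})$-module --- over $Z(\tilde{A})$ the conclusion $A=Z(\tilde{A})A$ is vacuous); and the preceding sentence about $Z(A)A$ being ``closed once we know it is all of $A$'' is circular and should simply be deleted, since Cohen already gives $A=Z(A)A$ as a set of products. Finally, your honest flagging of (i)$\Rightarrow$(ii) as the step requiring a genuine construction is well placed; your partition-of-unity sketch over the complete regularization is essentially how Archbold's proof goes, and citing it, as the paper does, is the intended resolution rather than a gap.
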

\begin{proof}
$(i) \Leftrightarrow (ii)$. This follows from \cite[Thm. 1]{Arc1}.

$(ii)\Rightarrow (iii).$ This follows directly from Cohen's factorization
theorem \cite[A.6.2]{Ble}, since $A$ is a nondegenerate Banach
$Z(A)$-modul.

$(iii)\Rightarrow (ii).$ This is trivial.

$(iii)\Leftrightarrow (iv).$ Since $\tilde{A}$ is unital, by \cite{ArcSom1} $A \in \mathrm{Glimm}(\tilde{A})$ if and only if there exists a maximal ideal $J$ of $Z(\tilde{A})$
such that $A=J\tilde{A}$. It follows that $J=Z(A)$, and hence $A\in \mathrm{Glimm}(\tilde{A})$ if and only if $A=Z(A)\tilde{A}=Z(A)A$.
\end{proof}

\begin{lemma}\label{triv} Let $A$ be a quasicentral $C^*$-algebra. Then $\mathrm{Inn}(A) \subseteq \im \theta_A$.
\end{lemma}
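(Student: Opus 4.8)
The plan is to reduce to the classical unital identity $\delta_a=\theta_A(a\o 1-1\o a)$, substituting for the (possibly absent) unit a central element supplied by quasicentrality. So fix $a\in A$; I want to exhibit an element $u\in A\o_h A$ with $\theta_A(u)=\delta_a$, which will show $\delta_a\in\im\theta_A$.

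The first step is to apply Proposition~\ref{qc}: since $A$ is quasicentral we have $A=Z(A)A$, so by Cohen's factorization theorem we may write $a=zb$ with $z\in Z(A)$ and $b\in A$ (a finite sum $a=\sum_i z_ib_i$ would serve equally well). Then set
$$u:=b\o z-z\o b\in A\o A\subseteq A\o_h A.$$
This is a legitimate element of the Haagerup tensor product precisely because both tensor legs, $b$ and $z$, lie in $A$ — which is exactly what the factorization buys us, in contrast to the naive $a\o 1-1\o a$ that is unavailable when $A$ is non-unital.

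The second step is a one-line verification. For every $x\in A$, using that $z$ is central,
$$\theta_A(u)(x)=bxz-zxb=zbx-xzb=ax-xa=\delta_a(x),$$
so $\delta_a=\theta_A(u)\in\im\theta_A$. Since $a\in A$ was arbitrary, $\mathrm{Inn}(A)\subseteq\im\theta_A$. (If one prefers to work with the finite sum $a=\sum_i z_ib_i$, the same computation gives $\delta_a=\theta_A\!\left(\sum_i(b_i\o z_i-z_i\o b_i)\right)$.)

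There is no genuine obstacle here: the entire content is the choice of the right central substitute for the unit, together with the observation that centrality of $z$ is exactly the property that survives the swap of the tensor legs and lets one recover $ax-xa$. It is worth emphasizing, however, that quasicentrality cannot simply be dropped: for a non-unital prime $C^*$-algebra one has $Z(A)=\{0\}$, and, as the paper shows, $\im\theta_A$ then contains no nonzero derivation at all.
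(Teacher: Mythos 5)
Your proof is correct and is essentially the paper's own argument: both use Proposition~\ref{qc} (via Cohen factorization) to write $a=zb$ with $z\in Z(A)$, $b\in A$, and then exhibit $\delta_a$ as $\theta_A$ of the commutator-type tensor built from $z$ and $b$. (Incidentally, your sign convention $\delta_a=\theta_A(b\o z-z\o b)$ is the correct one; the paper states $\theta_A(z\o b-b\o z)$, which gives $-\delta_a$, an immaterial discrepancy since $\mathrm{Inn}(A)$ is a linear space.)
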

\begin{proof} By proposition \ref{qc}, each $a \in A$ can be written in the form $a=zb$, for some $z \in Z(A)$ and $b \in A$. It follows that $\delta_a=\theta_A(z\otimes b-b \otimes z)$.
\end{proof}
\begin{question} If $A$ is a $C^*$-algebra with the property that $\mathrm{Inn}(A) \subseteq \im \theta_A$, is $A$ necessarily quasicentral?
\end{question}

Let $A$ be a $C^*$-algebra. By Dauns-Hofmann theorem \cite[A.34]{RW}, there
exists an isomorphism $\Psi_A : Z(M(A)) \to C_b(\mathrm{Prim}(A))$ such that $$za
+ P= \Psi_A(z)(P)(a+P), \quad \mathrm{for} \ \mathrm{all} \ z \in Z(M(A)), \ a \in
A \ \mathrm{and} \ P \in \mathrm{Prim}(A).$$
Since the norm functions $P \mapsto \|a+P\|$ $(a
\in A)$, $\mathrm{Prim}(A) \to \R_+$ vanish at infinity (see \cite[4.4.4]{Ped}), we have $\Psi_A(Z(A))\subseteq
C_0(\mathrm{Prim}(A)).$
If  $A$ is quasicentral then it follows from \cite{Arc1} that
\begin{equation}\label{Z}
\Psi_A(Z(A))=C_0(\mathrm{Prim}(A))
\end{equation}
 Using (\ref{Z}) it is easy to prove the following fact:
\begin{proposition}\label{unit} Let $A$ be a quasicentral $C^*$-algebra. The
following conditions are equivalent:
\begin{itemize}\item[(ii)] $A$ is unital,
\item[(ii)] $\mathrm{Prim}(A)$ is compact.
\end{itemize}
\end{proposition}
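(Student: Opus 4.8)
The plan is to derive both implications directly from identity (\ref{Z}), namely $\Psi_A(Z(A)) = C_0(\mathrm{Prim}(A))$, together with the defining pointwise property of the Dauns--Hofmann isomorphism $\Psi_A : Z(M(A)) \to C_b(\mathrm{Prim}(A))$.

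First I would treat the implication ``$A$ unital $\Rightarrow$ $\mathrm{Prim}(A)$ compact''. Here $1_A \in Z(A)$, and the Dauns--Hofmann identity $a + P = 1_A a + P = \Psi_A(1_A)(P)\,(a+P)$ holds for every $a \in A$ and $P \in \mathrm{Prim}(A)$. Since each primitive ideal is proper, for each $P$ one can choose $a \notin P$, and then the displayed identity forces $\Psi_A(1_A)(P) = 1$; hence $\Psi_A(1_A)$ is the constant function $1$ on $\mathrm{Prim}(A)$. By (\ref{Z}) this constant function lies in $C_0(\mathrm{Prim}(A))$, and a nonzero constant function can vanish at infinity only on a compact space, so $\mathrm{Prim}(A)$ is compact.

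For the converse I would reverse this reasoning. If $\mathrm{Prim}(A)$ is compact, then the constant function $\mathbf 1$ belongs to $C_0(\mathrm{Prim}(A)) = \Psi_A(Z(A))$, so there exists $z \in Z(A)$ with $\Psi_A(z) = \mathbf 1$. The Dauns--Hofmann identity then yields $za + P = a + P$, i.e.\ $za - a \in P$, for all $a \in A$ and all $P \in \mathrm{Prim}(A)$; since $\bigcap_{P \in \mathrm{Prim}(A)} P = \{0\}$, this gives $za = a$ for every $a \in A$, and centrality of $z$ then yields $az = za = a$, so $z$ is a unit for $A$.

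Every step reduces to a short computation once (\ref{Z}) is in hand, so I do not expect a genuine obstacle; the only mild point is to observe that the Dauns--Hofmann action is pointwise, so that the \emph{same} scalar $\Psi_A(z)(P)$ governs all $a$ simultaneously, which is precisely what allows the passage from an equality of functions to the algebraic identity $za = a$.
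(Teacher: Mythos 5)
Your proof is correct, and it takes the same basic route as the paper (everything hinges on the identity (\ref{Z})), but the two implementations differ in the details in a way worth noting. For ``unital $\Rightarrow$ compact'' the paper simply cites the standard fact \cite[3.1.8]{Dix}, whereas you rederive it by observing that $\Psi_A(1_A)$ is the constant function $1$ and that a nonzero constant can lie in $C_0(\mathrm{Prim}(A))$ only if the space is compact; this is a clean self-contained argument (and in fact only uses the general containment $\Psi_A(Z(A))\subseteq C_0(\mathrm{Prim}(A))$, not quasicentrality). For the converse, both you and the paper use compactness and (\ref{Z}) to produce the element $z\in Z(A)$ corresponding to the constant function $1$, i.e.\ the unit of $Z(A)$; the difference is in how one checks that $z$ is a unit for all of $A$. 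The paper invokes Proposition \ref{qc}(iii), i.e.\ the Cohen factorization $A=Z(A)A$, so that $ea=ezb=zb=a$. You instead use the pointwise Dauns--Hofmann identity to get $za-a\in P$ for every primitive ideal $P$ and conclude $za=a$ from $\bigcap_{P}P=\{0\}$. Your version avoids the quasicentrality characterization at this step (at the cost of using semisimplicity of $C^*$-algebras, which is standard), while the paper's version reuses machinery it has already set up. Both are complete and correct.
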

\begin{proof} Implication $(i) \Rightarrow (ii)$ follows from \cite[3.1.8]{Dix}.

$(ii) \Rightarrow (i)$. If $\mathrm{Prim}(A)$ is compact, then by (\ref{Z}) we
have $Z(A) \cong C_0(\mathrm{Prim}(A))=C(\mathrm{Prim}(A))$.
Hence, $Z(A)$ is unital. By proposition \ref{qc} (iii) it follows that the unit
of $Z(A)$ must also be the unit of $A$.
\end{proof}

\begin{remark}\label{un}
If $A$ is a quasicentral $C^*$-algebra, it follows that for each $P \in
\mathrm{Prim}(A)$ there exists a positive element $z_P \in Z(A)_+$
such that $\|z_P\|=1$ and $\Psi_A(z_P)(P)=1$. It follows that each primitive quotient $A/P$ is
unital with the unit $z_P+P$. Moreover, using the Gelfand transform of $Z(A)$, it can be easily seen (like in the proof of \cite[Thm. 5]{Arc1}) that for each compact subset $K \subseteq \mathrm{Prim}(A)$ there
exists $z \in Z(A)_+$ such that $\|z\|=1$ and $\Psi_A(z)(P)=1$, for each $P \in
K$.
\end{remark}

\begin{lemma}\label{qct} Let $A$ be a quasicentral $C^*$-algebra and let $P,Q
\in \mathrm{Prim}(A)$. The following conditions are equivalent:
\begin{itemize}
\item[(i)] $P \approx Q$ (in a sense of (\ref{approx})),
\item[(ii)] $f(P)=f(Q)$, for all $f \in C_0(\mathrm{Prim}(A))$,
\item[(iii)] $P \cap Z(A)=Q \cap Z(A)$.
\end{itemize}
\end{lemma}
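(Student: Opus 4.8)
The plan is to prove the cycle of implications $(i)\Rightarrow(ii)\Rightarrow(iii)\Rightarrow(i)$. The implication $(i)\Rightarrow(ii)$ is immediate, since $C_0(\mathrm{Prim}(A))\subseteq C_b(\mathrm{Prim}(A))$, so any $f$ vanishing at infinity certainly satisfies $f(P)=f(Q)$ whenever $P\approx Q$. For $(ii)\Rightarrow(iii)$, I would transport the statement to $Z(A)$ via the Dauns--Hofmann isomorphism $\Psi_A$. By equation (\ref{Z}) the restriction $\Psi_A\colon Z(A)\to C_0(\mathrm{Prim}(A))$ is onto, so condition (ii) says precisely that $\Psi_A(z)(P)=\Psi_A(z)(Q)$ for every $z\in Z(A)$. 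Now I must translate ``$\Psi_A(z)(P)=0$'' into ``$z\in P$''. Using the defining relation $za+P=\Psi_A(z)(P)(a+P)$, if $\Psi_A(z)(P)=0$ then $za\in P$ for all $a\in A$, and since $A/P$ is unital with unit $z_P+P$ (Remark \ref{un}) we may take $a=z_P$ to conclude $z=zz_P\in P$ after using centrality; conversely if $z\in P$ then $za\in P$ for all $a$, forcing $\Psi_A(z)(P)(a+P)=0$ for all $a$, hence $\Psi_A(z)(P)=0$ because $A/P\neq 0$. Thus for $z\in Z(A)$ we have the equivalence $z\in P\Leftrightarrow\Psi_A(z)(P)=0$, and likewise for $Q$. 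Consequently $P\cap Z(A)$ is exactly the kernel of the character $z\mapsto\Psi_A(z)(P)$ on $Z(A)$ (viewed through the Gelfand picture), and condition (ii) says these two characters agree, whence $P\cap Z(A)=Q\cap Z(A)$.

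For $(iii)\Rightarrow(i)$, suppose $P\cap Z(A)=Q\cap Z(A)$; I want $f(P)=f(Q)$ for all $f\in C_b(\mathrm{Prim}(A))$. Every such $f$ equals $\Psi_A(w)$ for a unique $w\in Z(M(A))$, so it suffices to show that the characters $w\mapsto\Psi_A(w)(P)$ and $w\mapsto\Psi_A(w)(Q)$ on $Z(M(A))$ coincide. The point is that $Z(M(A))$ can be recovered from $Z(A)$: since $A$ is quasicentral it has a central approximate unit (Proposition \ref{qc}(ii)), $Z(A)$ is an essential ideal of $Z(M(A))$, and $Z(M(A))$ embeds as (a subalgebra of) the multiplier algebra $M(Z(A))\cong C_b(\mathrm{Prim}(A))$ — indeed $\Psi_A$ already realizes this. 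A bounded multiplier of $Z(A)\cong C_0(\mathrm{Prim}(A))$ is determined by its action on $Z(A)$, and a character of $Z(M(A))$ not killing all of $Z(A)$ restricts to a nonzero character of $Z(A)$ and is uniquely determined by that restriction (this is the standard fact that $\mathrm{Prim}(A)$, as the spectrum of $Z(A)$, sits densely in the spectrum of $Z(M(A))$). Since $P,Q$ are primitive and $A$ is quasicentral, neither $P$ nor $Q$ contains $Z(A)$, so both induced characters on $Z(M(A))$ restrict to nonzero characters on $Z(A)$; by the preceding step (run backwards) the hypothesis $P\cap Z(A)=Q\cap Z(A)$ says these restrictions to $Z(A)$ are equal, hence the characters on $Z(M(A))$ are equal, giving $f(P)=f(Q)$ for all $f\in C_b(\mathrm{Prim}(A))$, i.e. $P\approx Q$.

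I expect the main obstacle to be the bookkeeping in $(iii)\Rightarrow(i)$: making precise that a nonzero character of $Z(M(A))\cong C_b(\mathrm{Prim}(A))$ whose restriction to $Z(A)\cong C_0(\mathrm{Prim}(A))$ is nonzero must be evaluation at a point of $\mathrm{Prim}(A)$ (rather than at a point of the Stone--Čech-type corona), and that this point is pinned down by the restricted character. This is where quasicentrality is used in an essential way — it guarantees that $P$ and $Q$ are not ``at infinity'' relative to $Z(A)$, so they correspond to genuine points detected by $C_0(\mathrm{Prim}(A))$. All the module-theoretic facts ($Z(A)$ essential in $Z(M(A))$, multipliers of a commutative $C^*$-algebra, density of the small spectrum in the big one) are standard and I would cite them rather than reprove them; the rest is the Dauns--Hofmann dictionary already set up in the preliminaries together with Remark \ref{un}.
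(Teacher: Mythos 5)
Your argument is correct, but it closes the cycle of implications in the opposite direction from the paper: you prove $(ii)\Rightarrow(iii)\Rightarrow(i)$, whereas the paper proves $(iii)\Rightarrow(ii)\Rightarrow(i)$ and takes both $(i)\Rightarrow(ii)$ and $(i)\Rightarrow(iii)$ as immediate. Concretely, the paper's $(iii)\Rightarrow(ii)$ exhibits the explicit element $v=z-f(P)u\in Z(A)$ with $u=\max\{z_P,z_Q\}$ and reads off $f(Q)=f(P)$ from $v\in Q\cap Z(A)$, and its $(ii)\Rightarrow(i)$ is the bare computation $g(P)=(fg)(P)=(fg)(Q)=g(Q)$ with $f=\Psi_A(z_P)\in C_0(\mathrm{Prim}(A))$. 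Your $(iii)\Rightarrow(i)$ repackages both of these steps as the statement that a character of $Z(M(A))\cong C_b(\mathrm{Prim}(A))$ which does not annihilate $Z(A)\cong C_0(\mathrm{Prim}(A))$ is determined by its restriction to $Z(A)$, and that this restriction is in turn determined by its kernel $P\cap Z(A)$. The underlying mechanism is the same---your identity $\chi(w)=\chi(wz)$ with $\chi(z)=1$, $z=z_P$, is exactly the paper's $g(P)=(fg)(P)$---and both arguments rest on the same two inputs, namely $\Psi_A(Z(A))=C_0(\mathrm{Prim}(A))$ (equation (\ref{Z})) and the local units $z_P$ of Remark \ref{un}. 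Your version is more conceptual and makes the role of quasicentrality transparent ($P$ and $Q$ are not ``at infinity'' relative to $Z(A)$), at the cost of invoking standard facts about characters of multiplier algebras (the key point being that $wz\in Z(A)$ for $w\in Z(M(A))$ and $z\in Z(A)$, which is what makes the restriction argument run; your side remarks about $M(Z(A))$ and density of spectra are not actually needed). The paper's version is more elementary and entirely self-contained. One cosmetic slip: in your $(ii)\Rightarrow(iii)$ the equality $z=zz_P$ holds only modulo $P$, since $z_P+P$ is merely the unit of $A/P$; but that congruence is all you need to conclude $z\in P$.
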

\begin{proof}
 Implications $(i)\Rightarrow (ii)$ and $(i)\Rightarrow (iii)$ follow
immediately.

$(ii)\Rightarrow (i)$. Let $g \in C_b(\mathrm{Prim}(A))$ and let
$f:=\Psi_A(z_P)$, where $z_P \in Z(A)_+$ is as in Remark \ref{un}. Then
 $f \in C_0(\mathrm{Prim}(A))$ and $f(P)=1$. By the assumption, we have $f(Q)=1$
and $(fg)(P)=(fg)(Q)$ (since $fg \in C_0(\mathrm{Prim}(A))$). Hence
$$g(P)=f(P)g(P)=(fg)(P)=(fg)(Q)=f(Q)g(Q)=g(Q).$$

$(iii)\Rightarrow (ii)$. Let $f \in C_0(\mathrm{Prim}(A))$. By (\ref{Z}) there
exists $z\in Z(A)$ such that $\Psi_A(z)=f$. Let $z_P, z_Q \in Z(A)_+$
 be as in Remark \ref{un}, and let $u:=\max\{z_P,z_Q\}$. Then for $v:=z-f(P)u$
we have $v \in P \cap Z(A)=Q \cap Z(A)$, and so
 $$0=\Psi_A(v)(Q)=f(Q)-f(P).$$
\end{proof}

If $A$ is unital, it follows from \cite{ArcSom1} that $\mathrm{Glimm}(A)$ is a compact Hausdorff space, and the map $\zeta_A: G \mapsto G \cap Z(A)$, from $\mathrm{Glimm}(A)$ onto $\mathrm{Max}(Z(A))$ is a homeomorphism, where $\mathrm{Max}(Z(A))$ denotes the maximal ideal space of $Z(A)$.
The next proposition gives a generalization of this result for quasicentral $C^*$-algebras.

\begin{proposition}\label{zz} Let $A$ be a quasicentral $C^*$-algebra. Then
$\mathrm{Glimm}(A)$ is a locally compact Hausdorff space and the map
$\zeta_A : G \mapsto G \cap Z(A)$ from $\mathrm{Glimm}(A)$ onto
$\mathrm{Max}(Z(A))$ is a homeomorphism.
\end{proposition}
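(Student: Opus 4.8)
The plan is to build the homeomorphism by first establishing the bijectivity of $\zeta_A$, then identifying the topologies on both sides via the Gelfand transform of $Z(A)$ and the identity $\Psi_A(Z(A)) = C_0(\mathrm{Prim}(A))$ from (\ref{Z}). Recall that Glimm ideals of $A$ are, by construction, the sets $\bigcap [P]$ for $P \in \mathrm{Prim}(A)$, and Lemma \ref{qct} tells us that $P \approx Q$ if and only if $P \cap Z(A) = Q \cap Z(A)$; so two primitive ideals lie in the same Glimm class exactly when they have the same intersection with $Z(A)$. This already strongly suggests that $\zeta_A$ should be well-defined and injective, and that $\mathrm{Glimm}(A)$ should be homeomorphic to a space built from $Z(A)$.

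The key steps I would carry out are as follows. First, I would verify that $\zeta_A$ is well-defined: if $G = \bigcap[P]$ is a Glimm ideal, I claim $G \cap Z(A) = P \cap Z(A)$ (which is independent of the representative $P$ by Lemma \ref{qct}(iii)); the inclusion $\subseteq$ is clear, and for $\supseteq$ one uses that $z \in P \cap Z(A)$ implies $z \in Q$ for all $Q \approx P$ by Lemma \ref{qct} again, hence $z \in \bigcap[P] = G$. Next I would check $G \cap Z(A)$ is a maximal ideal of $Z(A)$: using (\ref{Z}), $Z(A) \cong C_0(\mathrm{Prim}(A))$, and under this identification $G \cap Z(A)$ corresponds to the functions in $C_0(\mathrm{Prim}(A))$ vanishing on the (closed) set $[P]$; since all such functions share the value $0$ at every point of $[P]$ and agree in the Glimm quotient, this is exactly the maximal ideal of functions vanishing at the single point $\phi_A(P) \in \mathrm{Glimm}(A) \subseteq $ the one-point-compactification-type picture — more carefully, one shows it is the kernel of a character of $Z(A)$ and every character arises this way, giving surjectivity of $\zeta_A$ onto $\mathrm{Max}(Z(A))$. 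Injectivity is then immediate from Lemma \ref{qct}: if $G_1 \cap Z(A) = G_2 \cap Z(A)$ then for representatives $P_i$ we get $P_1 \cap Z(A) = P_2 \cap Z(A)$, so $P_1 \approx P_2$ and $G_1 = G_2$.

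For the topological statement, I would argue that $\mathrm{Max}(Z(A))$ with its hull-kernel (equivalently, weak-$*$) topology is a locally compact Hausdorff space since $Z(A)$ is a commutative $C^*$-algebra, namely $\mathrm{Max}(Z(A)) \cong \widehat{Z(A)}$, the Gelfand spectrum. To see $\zeta_A$ is a homeomorphism, I would use that the topology on $\mathrm{Glimm}(A)$ is by definition the weakest making all $f \in C_b(\mathrm{Prim}(A))$ continuous when dropped to $\mathrm{Glimm}(A)$, and that by (\ref{Z}) together with Lemma \ref{qct}, functions in $C_0(\mathrm{Prim}(A)) = \Psi_A(Z(A))$ already separate points of $\mathrm{Glimm}(A)$; matching $\Psi_A^{-1}(f)$ with the Gelfand transform shows that a net $G_\lambda \to G$ in $\mathrm{Glimm}(A)$ iff the corresponding characters converge in $\widehat{Z(A)}$, i.e. $\zeta_A$ and $\zeta_A^{-1}$ are both continuous. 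Then local compactness of $\mathrm{Glimm}(A)$ follows by transport of structure, and the Hausdorff property is automatic from the complete regularization construction (or again by transport).

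The main obstacle I anticipate is the bookkeeping around non-unitality: one must be careful that $C_0(\mathrm{Prim}(A))$ rather than $C_b(\mathrm{Prim}(A))$ is the relevant algebra, that $\mathrm{Glimm}(A)$ is only locally compact (not compact), and — most delicately — that the "maximal ideals" of the non-unital commutative $C^*$-algebra $Z(A)$ are genuinely the characters (one should note $Z(A)$ has no unit in general, so one works with the Gelfand spectrum $\widehat{Z(A)}$ and uses that $\mathrm{Max}(Z(A))$, understood as modular maximal ideals, coincides with it). Reconciling the value "$0$" that all functions in the Glimm-ideal share at points of $[P]$ with the character picture — i.e. showing the quotient map $\phi_A$ really does correspond to the Gelfand map of $Z(A)$ under $\Psi_A$ — is where (\ref{Z}) and Lemma \ref{qct} do the essential work, and packaging this cleanly will be the crux of the argument.
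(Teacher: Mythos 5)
Your strategy coincides with the paper's (injectivity from Lemma \ref{qct}(iii), surjectivity onto $\mathrm{Max}(Z(A))$, and matching the topology on $\mathrm{Glimm}(A)$ with the Gelfand topology of $Z(A)$ via $\Psi_A(Z(A))=C_0(\mathrm{Prim}(A))$), but the two steps you leave as assertions are exactly the ones that require an argument. The first is surjectivity: you say ``every character arises this way,'' but since $\mathrm{Prim}(A)$ is not Hausdorff there is no a priori reason why every character of $Z(A)\cong C_0(\mathrm{Prim}(A))$ should be evaluation at a point of $\mathrm{Prim}(A)$, i.e.\ why every maximal modular ideal $M$ of $Z(A)$ should equal $P\cap Z(A)$ for some primitive $P$. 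The paper supplies the missing ingredient: every irreducible representation (character) of $Z(A)$ lifts to an irreducible representation of $A$ (\cite[II.6.4.11]{Bla}), whose kernel $P$ then satisfies $P\cap Z(A)=M$. The Gelfand picture alone does not give this.

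The second gap is the topology. The topology on $\mathrm{Glimm}(A)$ is by definition the weak topology induced by $C_b(\mathrm{Prim}(A))$, whereas the topology you pull back through $\zeta_A$ is the weak topology induced by $\Psi_A(Z(A))=C_0(\mathrm{Prim}(A))$; you correctly flag reconciling the two as ``the crux,'' but you do not carry it out, and your plan to obtain local compactness of $\mathrm{Glimm}(A)$ ``by transport of structure'' presupposes the homeomorphism you are trying to establish. Two ways to close this: (a) the paper's route --- first prove local compactness of $\mathrm{Glimm}(A)$ directly (for $z\in Z(A)_+$ with $\|z+G\|>0$, the set $\{H\in\mathrm{Glimm}(A):\|z+H\|\ge\|z+G\|\}$ is a compact neighbourhood of $G$, being the image under $\phi_A$ of a compact subset of $\mathrm{Prim}(A)$), and then use that the topology of a locally compact Hausdorff space is the weak topology from its $C_0$-functions, together with $C_0(\mathrm{Glimm}(A))_+=\Psi_A(Z(A)_+)$; or (b) extend the division-by-a-bump-function trick from the proof of Lemma \ref{qct} to nets: if $\Psi_A(z)(G_\alpha)\to\Psi_A(z)(G)$ for all $z\in Z(A)_+$, choose $P$ with $\phi_A(P)=G$ and $f=\Psi_A(z_P)$ as in Remark \ref{un}, so $f(G)=1$; then for $g\in C_b(\mathrm{Prim}(A))$ one has $fg\in C_0(\mathrm{Prim}(A))$ and eventually $f(G_\alpha)>1/2$, whence $g(G_\alpha)=(fg)(G_\alpha)/f(G_\alpha)\to g(G)$. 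With either patch, and with surjectivity supplied as above, your outline becomes a complete proof; as written it identifies the right tools but stops exactly where the work is.
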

\begin{proof} Let $G \in \mathrm{Glimm}(A)$. Since $A$ is quasicentral, there
exists $z \in Z(A)_+$ such that $\|z+G\|>0$. Since the function
$P \mapsto \|z+P\|=\Psi_A(z)(P)$ is continuous on $\mathrm{Prim}(A)$, it follows that
$$\{H \in \mathrm{Glimm}(A) : \ \|z+H\| \geq \|z+G\| \}$$
is a compact neighborhood of $G$ in $\mathrm{Glimm}(A)$, being a continuous
image under $\phi_A$ of a compact subset
$$\{P \in \mathrm{Prim}(A) : \ \|z+P\| \geq \|z+G\|\}$$
of $\mathrm{Prim}(A)$. Hence $\mathrm{Glimm}(A)$ is a locally compact Hausdorff
space. We now prove that $\zeta_A$ is a homeomorphism. Since each irreducible representation of $Z(A)$ can be
lifted to the irreducible representation of $A$ (see \cite[II.6.4.11]{Bla}), $\zeta_A$ is
surjective. That $\zeta_A$ is also injective follows from Lemma \ref{qct}
(iii). Since the topology of the locally compact Hausdorff space
$\mathrm{Glimm}(A)$ coincides with the weak topology induced by
$C_0(\mathrm{Glimm}(A))_+$ and since $C_0(\mathrm{Glimm}(A))_+=\Psi_A(Z(A)_+)$,
 for a net $(G_\alpha)$ in $\mathrm{Glimm}(A)$, and $G \in \mathrm{Glimm}(A)$ we
have

\begin{eqnarray*}
 G_\alpha \to G &\iff& \Psi_A(z)(G_\alpha) \to \Psi_A(z)(G), \
\mathrm{for} \ \mathrm{all} \ z \in Z(A)_+\\
&\iff& \|z+G_\alpha\|
\to \|z+G\|, \ \mathrm{for} \ \mathrm{all} \ z \in Z(A)_+ \\
&\iff&
\|z+ G_\alpha \cap Z(A)\| \to \|z+G \cap Z(A)\|, \ \mathrm{for} \ \mathrm{all} \
z \in Z(A)_+ \\
&\iff& G_\alpha \cap Z(A) \to G\cap Z(A).
\end{eqnarray*}
It follows that $\zeta_A$ is a homeomorphism.
\end{proof}

\begin{remark}
If $A$ is a non-unital quasicentral $C^*$-algebra, then by Proposition \ref{unit}
$\mathrm{Prim}(A)$ and (hence) $\mathrm{Glimm}(A)$
are non-compact spaces. For $J \in \mathrm{Id}(A)$ let $J_\sim$ be the unique ideal of $\tilde{A}$ such that $A \cap J_\sim=J$.
By Proposition \ref{qc} (iv) and Proposition \ref{zz} it follows that the map $G \mapsto G_\sim$ is a homeomorphism from
$\mathrm{Glimm}(A)$ onto its image
$\mathrm{Glimm}(\tilde{A})\setminus \{A\}$ in $\mathrm{Glimm}(\tilde{A})$. Since
$\tilde{A}$ is unital, $\mathrm{Glimm}(\tilde{A})$ is a compact Hausdorff space,
and hence $\mathrm{Glimm}(\tilde{A})$ is the Alexandroff compactification of
$\mathrm{Glimm}(A)$. Since $\zeta_{\tilde{A}}(A)=Z(A)$, we have the following commutative diagram:
$$\begin{CD}
\mathrm{Prim}(A) @> \phi_A >> \mathrm{Glimm}(A) @> \zeta_A >>
\mathrm{Max}(Z(A))\\
@V  VV@V  VV @V  VV\\
\mathrm{Prim}(\tilde{A}) @>\phi_{\tilde{A}}>> \mathrm{Glimm}(\tilde{A})
@>\zeta_{\tilde{A}}>> \mathrm{Max}(Z(\tilde{A})),
\end{CD}$$
where the vertical maps denote the canonical embeddings.
\end{remark}

\begin{proposition}\label{A+} Let $A$ be a $C^*$-algebra. The following
conditions are equivalent:
\begin{itemize}\item[(i)] $A$ is quasicentral and $\mathrm{Prim}(A)$ is
Hausdorff,
\item[(ii)] $\mathrm{Prim}(\tilde{A})$ is Hausdorff.
\end{itemize}
\end{proposition}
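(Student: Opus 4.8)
The plan is to decompose $\mathrm{Prim}(\tilde{A})$ into the open part inherited from $A$ together with the single extra point, and then to push the quasicentrality hypothesis through the Dauns--Hofmann isomorphism. Recall first the relevant structure: since $\tilde{A}/A\cong\C$ is simple, $A$ is itself a primitive ideal of $\tilde{A}$, and it is the unique primitive ideal of $\tilde{A}$ containing $A$; hence $\mathrm{Prim}(\tilde{A})=U_0\cup\{A\}$ (disjoint), where $U_0:=\{Q\in\mathrm{Prim}(\tilde{A}):A\not\subseteq Q\}$ is open (being the complement of the closed point $\{A\}$) and is canonically homeomorphic to $\mathrm{Prim}(A)$ via $Q\mapsto Q\cap A$ (see, e.g., \cite{Dix}). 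Also $Z(A)\subseteq Z(\tilde{A})$, so every $z\in Z(A)$ gives a function $\Psi_{\tilde{A}}(z)\in C_b(\mathrm{Prim}(\tilde{A}))$. I would then treat the two implications separately.

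For $(i)\Rightarrow(ii)$ I would verify the Hausdorff property directly, separating an arbitrary pair of distinct points $Q_1\neq Q_2$ of $\mathrm{Prim}(\tilde{A})$. If neither contains $A$, then both lie in $U_0\cong\mathrm{Prim}(A)$, which is Hausdorff by hypothesis; disjoint neighbourhoods chosen inside $U_0$ are open in $\mathrm{Prim}(\tilde{A})$ because $U_0$ is open. In the remaining case, say $Q_1=A$ and $A\not\subseteq Q_2$, I would use quasicentrality: $Q_2\cap A\in\mathrm{Prim}(A)$ does not contain $Z(A)$, so (as $Z(A)\subseteq A$) $Z(A)\not\subseteq Q_2$; choose $z\in Z(A)_+$ with $z\notin Q_2$ and set $g:=\Psi_{\tilde{A}}(z)\in C_b(\mathrm{Prim}(\tilde{A}))_+$. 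Reading the defining identity $z\tilde a+Q=\Psi_{\tilde{A}}(z)(Q)(\tilde a+Q)$ at $\tilde a=1$: at $Q=A$ it gives $g(A)(1+A)=z+A=0$ (since $z\in A$), whence $g(A)=0$ as $1+A\neq 0$; at $Q=Q_2$ it gives $g(Q_2)(1+Q_2)=z+Q_2\neq 0$ since $z\notin Q_2$, whence $g(Q_2)>0$. Then $\{g<g(Q_2)/2\}$ and $\{g>g(Q_2)/2\}$ are disjoint open sets separating $A$ from $Q_2$, and $\mathrm{Prim}(\tilde{A})$ is Hausdorff.

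For $(ii)\Rightarrow(i)$: $\mathrm{Prim}(A)\cong U_0$ is a subspace of the Hausdorff space $\mathrm{Prim}(\tilde{A})$, hence Hausdorff. For quasicentrality I would invoke Proposition \ref{qc}(iv) and show $A$ is a Glimm ideal of $\tilde{A}$. Since $\tilde{A}$ is unital, $\mathrm{Prim}(\tilde{A})$ is compact; being also Hausdorff it is normal, so by Urysohn $C_b(\mathrm{Prim}(\tilde{A}))$ separates its points and the relation $\approx$ of (\ref{approx}) is trivial on $\mathrm{Prim}(\tilde{A})$. Consequently every primitive ideal $P$ of $\tilde{A}$ equals $\bigcap[P]$ for its own singleton class, i.e.\ is a Glimm ideal of $\tilde A$; in particular $A$ is, and Proposition \ref{qc} gives that $A$ is quasicentral.

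The argument is essentially formal once the decomposition $\mathrm{Prim}(\tilde{A})=U_0\cup\{A\}$ and the identification $U_0\cong\mathrm{Prim}(A)$ are in place; the one point to watch is the non-unital case, where one must \emph{not} assume $A+Q=\tilde{A}$ for $Q\in U_0$ (this fails, for instance when $A$ is simple and non-unital), which is exactly why in $(i)\Rightarrow(ii)$ I evaluate $g$ at $A$ and at $Q_2$ straight from the Dauns--Hofmann identity rather than by identifying $\tilde{A}/Q$ with a quotient of $A$.
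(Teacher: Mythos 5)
Your proof is correct, and the $(ii)\Rightarrow(i)$ direction coincides with the paper's (identify $\mathrm{Glimm}(\tilde{A})$ with $\mathrm{Prim}(\tilde{A})$, conclude $A\in\mathrm{Glimm}(\tilde{A})$, invoke Proposition \ref{qc}(iv), and read off Hausdorffness of $\mathrm{Prim}(A)$ from the open embedding). Where you genuinely diverge is in $(i)\Rightarrow(ii)$: the paper routes everything through Lemma \ref{qct}(iii), applied twice --- once to the unital (hence quasicentral) algebra $\tilde{A}$ to reduce Hausdorffness of $\mathrm{Prim}(\tilde{A})$ to the statement that distinct primitive ideals of $\tilde{A}$ meet $Z(\tilde{A})$ in distinct ideals, and once to $A$ to verify that statement for the images $P\cap A$, $Q\cap A$ in $\mathrm{Prim}(A)\cup\{A\}$ --- so all pairs of points are treated uniformly by the centre-intersection criterion. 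You instead split $\mathrm{Prim}(\tilde{A})=U_0\sqcup\{A\}$ and handle the two kinds of pairs separately: pairs inside $U_0$ are separated purely topologically (quasicentrality is not needed there, only Hausdorffness of $\mathrm{Prim}(A)$ and openness of $U_0$), and only the pair $(A,Q_2)$ uses the centre, via one explicit Dauns--Hofmann function $\Psi_{\tilde{A}}(z)$ vanishing at $A$ and positive at $Q_2$. Your version is more self-contained (it does not presuppose Lemma \ref{qct}) and makes visible exactly where each hypothesis is used; the paper's version is shorter because the separation bookkeeping is already packaged in Lemma \ref{qct}. Your closing caution about not identifying $\tilde{A}/Q$ with a quotient of $A$ is well taken, and your evaluation of $g$ at $A$ and $Q_2$ directly from the identity $z\tilde a+Q=\Psi_{\tilde{A}}(z)(Q)(\tilde a+Q)$ at $\tilde a=1$ is sound.
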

\begin{proof} $(i)\Rightarrow (ii)$. Since $\tilde{A}$ is unital, by Lemma \ref{qct} (iii)
it is sufficient to prove that distinct primitive ideals of
$\tilde{A}$ have distinct intersection with $Z(\tilde{A})$. Let $P,Q \in
\mathrm{Prim}(\tilde{A})$ such that $P \neq Q$. Then $P\cap A$ and $Q \cap A$
are distinct elements of $\mathrm{Prim}(A) \cup \{A\}$. Since $A$ is quasicentral and $\mathrm{Prim}(A)$
is Hausdorff, it follows from Lemma \ref{qct} (iii) that they have
distinct intersection with $Z(A) \subseteq Z(\tilde{A})$.

$(ii)\Rightarrow (i)$. Since $\mathrm{Prim}(\tilde{A})$ is Hausdorff, we have $\mathrm{Glimm}(\tilde{A})=\mathrm{Prim}(\tilde{A})$ and
hence $A \in \mathrm{Glimm}(\tilde{A})$. By Proposition \ref{qc} $A$ is
quasicentral. Since $\mathrm{Prim}(A)$ is homoeomorphic to the (open) subset $\mathrm{Prim}(\tilde{A})\setminus\{A\}$ of $\mathrm{Prim}(\tilde{A})$,  $\mathrm{Prim}(A)$ is also Hausdorff.
\end{proof}

\section{Derivations in $\im \theta_A$ on Prime $C^*$-algebras}

Recall that a $C^*$-algebra $A$ is called \textit{prime} if the zero ideal $(0)$
is a prime ideal of $A$. Since by \cite[1.2.47]{Ara} the center $Z(A)$ of a prime
$C^*$-algebra $A$ is either zero (if $A$ is non-unital) or isomorphic to $\C$
(if $A$ is unital), it follows from Proposition \ref{unit} that $A$ is unital if and only if
it is quasicentral.

\begin{remark}\label{rs}
Mathieu showed that the canonical contraction $\theta_A$ is an isometry if and only
if $A$ is prime $C^*$-algebra (see \cite[5.4.11]{Ara}). Since by \cite[1.1.7]{Ara} $A$ is prime if and only if $M(A)$ is prime, it follows
(using the Kaplansky's density theorem)
that in this case the map $$\Theta_A : M(A) \otimes_h M(A) \to \mathrm{ICB}(A), \quad \Theta_A(t):=\theta_{M(A)}(t)|_A$$
is also an isometry.
\end{remark}

Recall from \cite[3.2]{Smi} that a subset $\{a_n\}$ of a
$C^*$-algebra $A$ such that the series $\sum_{n=1}^\infty a_n^*a_n$
is norm convergent is said to be \textit{strongly independent} if
whenever  $(\alpha_n) \in \ell^2$ is a square summable sequence of complex
numbers such that $\sum_{n=1}^\infty \alpha_n a_n=0$, we have $\alpha_n=0$, for
all $n \in \N$.

The next lemma is a combination of \cite[1.5.6]{Ble}, \cite[4.1]{Smi} and
\cite[2.3]{ASS}.
\begin{lemma}\label{rep} Let $A$ be a $C^*$-algebra.
\begin{itemize}
\item[(a)] Every tensor $t \in A \otimes_h A$ has a representation as a
convergent series $t=\sum_{n=1}^\infty a_n \otimes b_n,$ where $(a_n)$ and
$(b_n)$ are sequences of $A$ such that the series $\sum_{n=1}^\infty a_n a_n^*$
and $\sum_{n=1}^\infty b_n^*b_n$ are norm convergent. Moreover, $\{b_n\}$ can be chosen to be strongly independent.
\item[(b)] If $t=\sum_{n=1}^\infty a_n \otimes b_n$ is a representation of $t$
as above, with $\{b_n\}$ strongly independent, then $t=0$ if and only if $a_n=0$,
for all $n \in \N$.
\end{itemize}
 \end{lemma}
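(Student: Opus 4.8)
The plan is as follows. The existence of a representation $t=\sum_n a_n\otimes b_n$ with $\sum_n a_na_n^*$ and $\sum_n b_n^*b_n$ norm convergent is the standard description of the Haagerup tensor product (\cite[1.5.6]{Ble}), so the real content of (a) is to upgrade such a representation so that $\{b_n\}$ also becomes strongly independent, and the content of (b) is the accompanying rigidity statement. Everything rests on the elementary inequality $\|\sum_n\alpha_nb_n\|\le\|(\alpha_n)\|_{2}\,\|\sum_nb_n^*b_n\|^{1/2}$ (and its mirror with $a_n^*$ in place of $b_n$), which makes $T\colon\ell^2\to A$, $T((\alpha_n)):=\sum_n\alpha_nb_n$, a bounded operator, and likewise $\widetilde S\colon\ell^2\to A$, $\widetilde S((\beta_n)):=\sum_n\beta_na_n$, bounded.

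For (a), I would start from an arbitrary representation as above, set $N:=\ker T$ (a closed subspace of $\ell^2$), choose an orthonormal basis $(g_k)_k$ of $N^\perp$ with coordinates $g_k=(g_{k,n})_n$, and put
\[
c_k:=T(g_k)=\sum_n g_{k,n}\,b_n,\qquad a'_k:=\sum_n\overline{g_{k,n}}\,a_n.
\]
Then three things need checking. (i) $\{c_k\}$ is strongly independent, since $\sum_k\beta_kc_k=0$ with $(\beta_k)\in\ell^2$ forces $\sum_k\beta_kg_k\in\ker T\cap N^\perp=N\cap N^\perp=\{0\}$, hence $\beta_k=0$ for all $k$. (ii) $t=\sum_k a'_k\otimes c_k$: writing $e_n=P_Ne_n+P_{N^\perp}e_n$ and using $T(P_Ne_n)=0$ gives $b_n=\sum_k\overline{g_{k,n}}c_k$ in $A$, and substituting this into $t=\sum_na_n\otimes b_n$ and interchanging the two summations --- which is just associativity of the operator product underlying the Haagerup pairing, all series being uniformly controlled --- yields $t=\sum_k\big(\sum_n\overline{g_{k,n}}a_n\big)\otimes c_k$. (iii) $\sum_k c_k^*c_k$ and $\sum_k a'_k(a'_k)^*$ are norm convergent. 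Point (iii) is the only one I expect to require real care.

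To verify (iii): the column $(c_k)_k$ and the row $(a'_k)_k$ are obtained from $(b_n)_n$ and $(a_n)_n$ by compression with the matrix $G:=(g_{k,n})_{k,n}$, which is a contraction on $\ell^2$ by Bessel's inequality; so $\sum_kc_k^*c_k\le\sum_nb_n^*b_n$ and $\sum_ka'_k(a'_k)^*\le\sum_na_na_n^*$ as inequalities in $A_+$, which is boundedness but not yet norm convergence. For the latter I would argue as follows (for $\sum_kc_k^*c_k$; the row case is identical): given $\varepsilon>0$, pick $N_0$ with $\|\sum_{n>N_0}b_n^*b_n\|<\varepsilon^2$ and split $c_k=c_k'+c_k''$ with $c_k':=\sum_{n\le N_0}g_{k,n}b_n$ and $c_k'':=\sum_{n>N_0}g_{k,n}b_n$. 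Compression by a submatrix of $G$ shows $\sum_{k>m}(c_k'')^*c_k''\le\sum_{n>N_0}b_n^*b_n$ uniformly in $m$, hence has norm $<\varepsilon^2$; while $\sum_{k>m}(c_k')^*c_k'\le\big(\sum_{k>m}\sum_{n\le N_0}|g_{k,n}|^2\big)\sum_{n\le N_0}b_n^*b_n$, whose norm $\to0$ as $m\to\infty$ because $\sum_k|g_{k,n}|^2\le1$ (Bessel) for each of the finitely many $n\le N_0$. The triangle inequality for the column norm $(x_k)\mapsto\|\sum_kx_k^*x_k\|^{1/2}$ then gives $\limsup_m\|\sum_{k>m}c_k^*c_k\|^{1/2}\le\varepsilon$, and $\varepsilon$ was arbitrary. (This orthogonalisation step is essentially \cite[4.1]{Smi}.)

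For (b), the ``if'' direction is trivial, so assume $t=0$ with $\{b_n\}$ strongly independent. For $\phi\in A^*$ the left slice map $A\otimes_hA\to A$ determined by $a\otimes b\mapsto\phi(b)a$ is bounded (a standard property of $\otimes_h$), so evaluating it at $t$ gives $\sum_n\phi(b_n)a_n=0$ for every $\phi\in A^*$. The set $D:=\{(\phi(b_n))_n:\phi\in A^*\}$ lies in $\ell^2$ (it is the range of the Banach-space adjoint of $T$), and it is norm-dense there: a vector $\alpha\in\ell^2$ annihilates $D$ under the bilinear pairing iff $\phi\big(\sum_n\alpha_nb_n\big)=0$ for all $\phi$, i.e. iff $\sum_n\alpha_nb_n=0$, i.e. (strong independence) only for $\alpha=0$; hence $D^\perp=\{0\}$ and $\overline{D}=\ell^2$. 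But $\sum_n\phi(b_n)a_n=\widetilde S\big((\phi(b_n))_n\big)$, so $\widetilde S$ vanishes on the dense set $D$, hence $\widetilde S\equiv0$ by continuity, and therefore $a_n=\widetilde S(e_n)=0$ for every $n$. (This rigidity statement is \cite[2.3]{ASS}.)
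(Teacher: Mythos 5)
Your proof is correct; note that the paper offers no proof of this lemma at all, merely recording it as a combination of \cite[1.5.6]{Ble}, \cite[4.1]{Smi} and \cite[2.3]{ASS}. Your argument is a faithful self-contained reconstruction of exactly those ingredients --- the orthogonalisation of $(b_n)$ over $(\ker T)^{\perp}$, together with the tail estimate giving norm convergence of $\sum_k c_k^*c_k$, is the content of \cite[4.1]{Smi}, and the slice-map/density argument in part (b) is the proof of \cite[2.3]{ASS} --- so it follows the standard route the paper implicitly relies on rather than diverging from it.
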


\begin{theorem}\label{prime} Let $A$ be a prime $C^*$-algebra. Every derivation $\delta \in
\mathrm{Der}(A) \cap \im \theta_A$ is inner in $A$. If $A$ is
not unital, then $\mathrm{Der}(A) \cap \im \theta_A=\{0\}$.
 \end{theorem}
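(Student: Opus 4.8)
The plan is to show, via the Leibniz rule together with Mathieu's isometry theorem, that $\delta$ is implemented on $A$ by an element of $A$ itself; this immediately settles the unital case, and a further slicing argument then forces that element to vanish in the non-unital case. Concretely, I would first fix (Lemma \ref{rep}(a)) a representation $\delta=\theta_A(t)$ with $t=\sum_n a_n\otimes b_n$, where $a_n,b_n\in A$ and the series $\sum_n a_na_n^*$, $\sum_n b_n^*b_n$ are norm convergent, so that $\delta(x)=\sum_n a_nxb_n$ for all $x\in A$. Since $A$ is prime, so is $M(A)$, and hence $\theta_{M(A)}$ is injective (Remark \ref{rs}).

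The heart of the argument is to turn the Leibniz rule into a tensor identity. For a fixed $x\in A$, the Leibniz rule gives $\sum_n(a_nx-xa_n)yb_n=\delta(xy)-x\delta(y)=\delta(x)y$ for all $y\in A$; equivalently, the completely bounded maps $\theta_{M(A)}\big(\sum_n(a_nx-xa_n)\otimes b_n\big)$ and $\theta_{M(A)}(\delta(x)\otimes 1)$ on $M(A)$ agree on $A$ (the first tensor lies in $M(A)\otimes_h M(A)$ because the relevant square-sums are norm convergent). As both maps are continuous from the bounded--strict topology of $M(A)$ to the norm topology, and $A$ is bounded--strictly dense in $M(A)$, they agree on all of $M(A)$, whence, by injectivity of $\theta_{M(A)}$, one obtains $\sum_n(a_nx-xa_n)\otimes b_n=\delta(x)\otimes 1$ in $M(A)\otimes_h M(A)$, for every $x\in A$. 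I expect this to be the main obstacle: the Leibniz relation a priori concerns only elements of $A$, so promoting it to a genuine identity in $M(A)\otimes_h M(A)$ is exactly what forces one to pass to $M(A)$ and to use both its primeness and the bounded--strict density of $A$ inside it.

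With this identity, call it $(\star)$, the unital case is immediate. Apply to $(\star)$ the bounded right slice map $\mathrm{id}\otimes\psi$, for a state $\psi$ of $M(A)$. By the usual Cauchy--Schwarz estimate (using norm convergence of $\sum_n b_n^*b_n$ and of $\sum_n a_na_n^*$), the series $a_\psi:=\sum_n\psi(b_n)a_n$ converges in norm to an element of $A$ (as $a_n\in A$), and since $\psi(1)=1$, $(\star)$ becomes $\delta(x)=a_\psi x-xa_\psi$ for all $x\in A$; that is, $\delta=\delta_{a_\psi}$ with $a_\psi\in A$. If $A$ is unital this shows $\delta\in\mathrm{Inn}(A)$.

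If $A$ is non-unital then $Z(A)=(0)$. Comparing the expression $\delta=\delta_{a_\psi}$ for two states $\psi,\psi'$ shows that $a_\psi-a_{\psi'}\in A$ commutes with $A$, hence $a_\psi-a_{\psi'}\in Z(A)=(0)$; thus $a:=a_\psi$ is independent of $\psi$, and by linearity $\sum_n\phi(b_n)a_n=\phi(1)a$ for every $\phi\in M(A)^*$. Applying any $\rho\in A^*$ to this norm-convergent series gives $\sum_n\phi(b_n)\rho(a_n)=\phi(1)\rho(a)$; using norm convergence of $\sum_n a_na_n^*$ to form $b_\rho:=\sum_n\rho(a_n)b_n\in A$, this reads $\phi(b_\rho)=\phi(\rho(a)1)$ for all $\phi\in M(A)^*$, so $b_\rho=\rho(a)1$ in $M(A)$. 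Since $b_\rho\in A$ and $1\notin A$, this forces $\rho(a)=0$, and as $\rho\in A^*$ was arbitrary, $a=0$ and $\delta=0$; that is, $\mathrm{Der}(A)\cap\im\theta_A=\{0\}$. The second delicate point lies here: slicing $(\star)$ by a state yields a conclusion strong enough — $a$ independent of the state — to collapse $a$ to zero.
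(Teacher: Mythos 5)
Your proof is correct, and while its first half coincides with the paper's (both convert the Leibniz rule into the tensor identity $\delta(x)\otimes 1=\sum_n(a_nx-xa_n)\otimes b_n$ in $M(A)\otimes_h M(A)$ using the primeness of $M(A)$ and Kaplansky density --- the paper simply quotes Remark \ref{rs} where you re-derive that step), the second half takes a genuinely different route. The paper fixes a representation with $\{b_n\}$ strongly independent, slices the \emph{first} leg with a functional $\varphi$ not vanishing on some $\delta(x_0)$ to conclude $1=\sum_n\alpha_n b_n\in A$ (which settles the non-unital case in one stroke), and then uses strong independence via Lemma \ref{rep}(b) to obtain $\alpha_n\delta(x)=a_nx-xa_n$ for every $n$, so that $a_k/\alpha_k$ for any $k$ with $\alpha_k\neq 0$ implements $\delta$. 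You instead slice the \emph{second} leg with a state $\psi$, which hands you the implementing element $a_\psi=\sum_n\psi(b_n)a_n\in A$ directly and makes strong independence unnecessary; the price is that the non-unital case no longer falls out for free, and you pay it with the additional argument through $Z(A)=(0)$, the independence of $a_\psi$ from $\psi$, and the identity $b_\rho=\rho(a)1$ forcing $a=0$. Both routes are sound: yours gives a slightly leaner proof of innerness, the paper's a quicker disposal of the non-unital case. One terminological caveat: the map $\mathrm{id}\otimes\psi$ you apply (sending $a\otimes b$ to $\psi(b)a$) is the \emph{left} slice map $L_\psi$ in the notation of \cite{Smi}, whereas the paper's $R_\varphi$ slices the other leg; the formula you use is the correct one for your purpose, only the name is off.
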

 \begin{proof}
Let $\Theta_A$ be the map as in Remark \ref{rs} and let $t \in A \otimes_h A$ be a tensor such that $\Theta_{A}(t)= \delta$ (we assume that $A\otimes_h A \subseteq M(A)\otimes_h M(A)$, by the injectivity of the Haagerup tensor product). Suppose that $t=\sum_{n=1}^\infty a_n \otimes b_n$ is a representation of
$t$ like in Lemma \ref{rep}, with  $\{b_n\}$ strongly independent. Since $\delta$
is a derivation on $A$, Leibniz rule (\ref{Leibniz}) implies that
\begin{equation}\label{D1}
\delta(x)y=\sum_{n=1}^\infty(a_nx-xa_n)yb_n, \quad  \mathrm{for} \ \mathrm{all}
\ x,y \in A.
\end{equation}
By remark \ref{rs} $\Theta_{A}$ is an isometry, and so the equality (\ref{D1}) is equivalent
to the equality of tensors
\begin{equation}\label{D2}
\delta(x) \otimes 1 = \sum_{n=1}^\infty (a_n x - xa_n) \otimes b_n  \ \
\mathrm{in} \ M(A) \otimes_h M(A), \quad  \mathrm{for} \ \mathrm{all}
\ x \in A.
\end{equation}
Suppose that $\delta \neq 0$. Then (\ref{D2}) implies that $A$ must be unital,
so $A=M(A)$. Indeed, choose $x_0 \in A$ such that $\delta(x_0)\neq
0$, and let $\varphi \in M(A)^*$ be an arbitrary (bounded) functional such that
$\varphi(\delta(x_0))\neq 0$. If we act in the equality (\ref{D2}) for $x=x_0$ with
the right slice map $R_\varphi$\footnote{For a $C^*$-algebra $B$ and $\psi \in B^*$, the \textit{right slice map} $R_\psi$ is a unique bounded map $ B\otimes_h B \to B$ given on elementary tensors by $R_\psi(a\otimes b)=\psi(a)b$ (see \cite[Section 4]{Smi}).}, we obtain
\begin{equation}\label{pom}
1=\frac{1}{\varphi(\delta(x_0))} \sum_{n=1}^\infty \varphi(a_n x_0 - x_0a_n)
b_n,
\end{equation}
and hence $1 \in A$. Let
$$\alpha_n:=\frac{\varphi(a_nx_0-x_0a_n)}{\varphi(\delta(x_0))} \ (n \in \N).$$
Since each bounded functional on a $C^*$-algebra is completely bounded (see \cite[3.8]{Paul}), and since the series
$\sum_{n=1}^\infty (a_nx_0-x_0a_n)(a_nx_0-x_0a_n)^*$ is norm convergent, it follows that $(\alpha_n)\in \ell^2$, and (\ref{pom}) implies that
$\sum_{n=1}^\infty \alpha_n b_n=1$. Then from (\ref{D2}) it follows that
$$\sum_{n=1}^\infty (\alpha_n \delta(x)-a_n x + xa_n)\otimes b_n=0, \quad
\mathrm{for} \ \mathrm{all} \ x \in A,$$
and consequently, since $\{b_n\}$ is strongly independent, Lemma \ref{rep} (b)
implies that
\begin{equation}\label{derr}
\alpha_n\delta(x)= a_n x - xa_n \quad \mathrm{for} \ \mathrm{all} \ x \in A \
\mathrm{and} \ n \in \N.
\end{equation}
Since $\sum_{n=1}^\infty \alpha_n b_n=1$, there is some $k \in \N$ such that
$\alpha_k \neq 0$. If $a:=\frac{a_k}{\alpha_k}$, then the equality (\ref{derr})
implies that $\delta = \delta_a \in \mathrm{Inn}(A)$.
\end{proof}

\section{Derivations in $\im \theta_A$ on $C^*$-algebras with Hausdorff primitive spectrum}

\begin{definition}\label{smooth} Let $A$ be a $C^*$-algebra, and let $\delta$ be a derivation on $A$. We define a
bounded function
$$|\delta| : \mathrm{Prim}(A) \to \R_+ \quad \mathrm{by} \quad
|\delta|(P):=\|\delta_{P}\|, \ \mathrm{for} \ P \in \mathrm{Prim}(A).$$
By \cite[2.2]{AEP} $|\delta|$ is a lower semi-continuous function on $\mathrm{Prim}(A)$.
If $|\delta|$ is continuous on $\mathrm{Prim}(A)$, we say that $\delta$ is \textit{smooth}.
\end{definition}

\begin{remark} The function $|\delta|$ is usually defined on the spectrum
$\hat{A}$ of $A$, by $|\delta|([\pi]):=\|\delta_{\pi}\|$, where
$\pi$ is some element of $[\pi]\in \hat{A}$, and $\delta_{\pi}$ denotes the
induced derivation on $\pi(A)$ ($\delta_\pi(\pi(a))=\pi(\delta(a))$ $(a \in A)$). In this case $\delta$ is said to
be smooth if $|\delta|$, as a function on $\hat{A}$, is continuous (see
\cite[2.3]{AEP} or \cite[4.2.6]{Ara}). Since $\|\delta_{\pi}\|=\|\delta_P\|$, where $P:=\ker\pi$, we note that this two definitions are
consistent with each other.
\end{remark}
The notion of the smooth derivation is important, since by \cite[2.4]{AEP} (or \cite[4.2.7]{Ara}) each smooth derivation on a $C^*$-algebra $A$ is inner in $M(A)$.
\medskip

 Let $A$ be a $C^*$-algebra and let $I,J \in \mathrm{Id}(A)$. If $q_I : A \to
A/I$ and $q_J : A \to A/J$ denote the quotient maps, it follows
 form \cite[2.8]{ASS} that the induced map
   $q_I \otimes q_J : A \otimes_h A \to A/I \otimes_h A/J$ is a quotient map and
that $$\ker(q_I \otimes q_J)=I \otimes_h A + A \otimes_h J.$$
Hence, we have
$$(A \otimes_h A)/(I \otimes_h A + A \otimes_h J) \cong A/I \otimes_h A/J,$$
isometrically.

We also define a
bounded function
 $$|t| : \mathrm{Prim}(A) \to \R_+ \quad \mathrm{by} \quad |t|(P):=\|q_P \otimes
q_P(t)\|_h, \ \mathrm{for} \ P \in \mathrm{Prim}(A).$$

Recall from \cite{Arc2} that the \textit{strong topology} $\tau_s$ on $\mathrm{Id}(A)$ is the weakest topology that makes all norm functions $J \mapsto \|a+J\|$ $(a \in A)$ continuous on $\mathrm{Id}(A)$.

\begin{lemma}\label{tens} Let $A$ be a $C^*$-algebra with  Hausdorff
primitive spectrum. For each tensor $t \in A \otimes_h A$ the function $|t|$ is continuous on $\mathrm{Prim}(A)$.
\end{lemma}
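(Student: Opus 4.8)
The plan is to exploit the Hausdorff hypothesis on $\mathrm{Prim}(A)$ through the comparison between the Jacobson topology and the strong topology $\tau_s$ on $\mathrm{Id}(A)$, and through the quotient description of the Haagerup tensor product recalled just above the statement. Recall that $|t|(P) = \|(q_P \otimes q_P)(t)\|_h$, and that $(q_P \otimes q_P)(t)$ lives in $A/P \otimes_h A/P \cong (A \otimes_h A)/(P \otimes_h A + A \otimes_h P)$. So $|t|(P)$ is the norm of $t$ in the quotient of $A \otimes_h A$ by the ideal $P \otimes_h A + A \otimes_h P$. Lower semicontinuity of $P \mapsto |t|(P)$ holds in general (it can be extracted the same way as for $|\delta|$, since a quotient norm of a fixed element is a supremum of continuous-type functions; in fact one can also get it from the lower semicontinuity with respect to $\tau_s$ combined with the fact that $\phi \colon P \mapsto P \otimes_h A + A \otimes_h P$ is, roughly, $\tau_s$-lower-semicontinuous). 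The real content is upper semicontinuity, and this is where the Hausdorff assumption enters.

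First I would fix $t = \sum_{n=1}^\infty a_n \otimes b_n$, a representation as in Lemma \ref{rep} with $\sum a_n a_n^*$ and $\sum b_n^* b_n$ norm convergent, and a point $P_0 \in \mathrm{Prim}(A)$ together with a net $P_\alpha \to P_0$ in the Jacobson topology. I want to show $\limsup_\alpha |t|(P_\alpha) \le |t|(P_0)$. The key classical fact (due to Archbold) is that when $\mathrm{Prim}(A)$ is Hausdorff, the Jacobson topology and the strong topology $\tau_s$ coincide on $\mathrm{Prim}(A)$, so $P_\alpha \to P_0$ in $\tau_s$, i.e. $\|a + P_\alpha\| \to \|a + P_0\|$ for every $a \in A$. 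Moreover, Hausdorffness of $\mathrm{Prim}(A)$ gives good approximate-unit / partition-of-unity behaviour: one can find $c \in Z(M(A))$ (or $c \in A$ when convenient) with $\Psi_A(c)(P_0) = 1$ and $\Psi_A(c)$ supported near $P_0$; more to the point, norm-continuity along the net lets me truncate the series. Namely, pick $N$ so that $\|\sum_{n > N} a_n \otimes b_n\|_h$ is small (possible by the Haagerup row/column norm estimates), and handle the finite part $t_N = \sum_{n=1}^N a_n \otimes b_n$ directly.

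For the finite part I would argue that $P \mapsto \|(q_P \otimes q_P)(t_N)\|_h$ is continuous: in $A/P$, the Haagerup norm of $\sum_{n=1}^N (a_n + P) \otimes (b_n + P)$ can be written via the supremum over states / finite-dimensional representations, and the crucial point is that in the Hausdorff case the quotient maps "vary continuously" — concretely, using a central element $z \in Z(A)$ with $\Psi_A(z) = 1$ on a compact Hausdorff neighbourhood $K$ of $P_0$, I can identify, over $K$, the algebras $A/P$ with fibres of a continuous $C^*$-bundle (Hausdorff primitive spectrum + the Dauns–Hofmann machinery recalled earlier give exactly this: $A$ restricted over $K$ is a $C_0(K)$-algebra with continuous norm function, hence a continuous field). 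Then $P \mapsto \|(q_P \otimes q_P)(t_N)\|_h$ is the norm of a fixed section of the (Haagerup) tensor-product field, and continuity of such norms for continuous fields is standard. Combining the small-tail estimate with continuity of the finite part yields $\limsup |t|(P_\alpha) \le |t|(P_0) + \varepsilon$ for all $\varepsilon$, hence upper semicontinuity; together with lower semicontinuity, $|t|$ is continuous.

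The main obstacle is making rigorous the claim that over a compact Hausdorff neighbourhood the fibres $A/P$ form a continuous field and that the Haagerup-tensor norm $\|(q_P \otimes q_P)(t)\|_h$ is then a continuous section norm — in particular uniformly controlling the tail $\sum_{n>N} (a_n + P)\otimes(b_n+P)$ in $A/P \otimes_h A/P$ uniformly in $P$. This uniform tail control should follow from the fact that $\|\sum_{n>N}(a_n+P)(a_n+P)^*\| \le \|\sum_{n>N} a_n a_n^*\|$ and likewise for the $b_n$'s, so the Haagerup norm of the tail in each fibre is bounded by the (small) Haagerup norm of the tail in $A \otimes_h A$, uniformly. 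The delicate point is therefore not the tail but confirming that the finite-rank section norm is genuinely continuous, which is where one invokes Archbold's identification of the Jacobson and strong topologies in the Hausdorff case plus the continuity of Haagerup norms along continuous fields; I would cite \cite{Arc2} and \cite{ArcSom1} for the topological input and \cite{ASS} for the tensor-quotient identification.
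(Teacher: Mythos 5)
Your topological reduction is exactly the one the paper makes: Hausdorffness of $\mathrm{Prim}(A)$ forces the norm functions $P\mapsto\|a+P\|$ to be continuous, so the Jacobson and $\tau_s$-topologies coincide on $\mathrm{Prim}(A)$, and the tail estimate $\|(q_P\otimes q_P)(t-t_N)\|_h\le\|\sum_{n>N}a_na_n^*\|^{1/2}\|\sum_{n>N}b_n^*b_n\|^{1/2}$ is a correct and uniform reduction to finite tensors. The problem is that after this reduction you have discharged essentially none of the content of the lemma: the continuity of $P\mapsto\|(q_P\otimes q_P)(t_N)\|_h$ \emph{is} the lemma (for finite tensors), and your justification for it is an appeal to ``continuity of such norms for continuous fields is standard.'' It is not standard in the form you need. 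Knowing that $A$ is a continuous field over $\mathrm{Prim}(A)$ with fibres $A/P$ does not automatically make the fibrewise Haagerup tensor product into a continuous field; that is precisely what has to be proved. Concretely, a near-optimal Haagerup representation of $(q_{P_0}\otimes q_{P_0})(t_N)$ in $A/P_0\otimes_h A/P_0$ lifts to elements of $A$, but the lifted tensor agrees with $t_N$ only modulo $P_0\otimes_h A+A\otimes_h P_0$, so its images at nearby $P$ differ from $(q_P\otimes q_P)(t_N)$ by an error whose Haagerup norm you must again control --- which is the same problem you started with. You yourself flag this as ``the main obstacle'' and ``the delicate point,'' but you never resolve it, and the references you offer (\cite{Arc2}, \cite{ArcSom1}, \cite{ASS}) supply the topology and the quotient identification, not this continuity.

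The paper closes exactly this gap by citing Somerset \cite[Prop.~2]{Som2}: for each $t\in A\otimes_h A$ the map $(I,J)\mapsto\|t+(I\otimes_h A+A\otimes_h J)\|=\|(q_I\otimes q_J)(t)\|_h$ is jointly continuous on $\mathrm{Id}(A)\times\mathrm{Id}(A)$ for the product $\tau_s$-topology; restricting to the diagonal of $\mathrm{Prim}(A)\times\mathrm{Prim}(A)$ and using the coincidence of topologies finishes the proof in two lines. If you want to avoid that citation, the honest route is to prove the two semicontinuities directly: upper semicontinuity of $P\mapsto\|t\|_{A/P\otimes_h A/P}$ follows by approximating $t$ within $\varepsilon$ of its quotient norm at $P_0$ by $t-s$ with $s$ a \emph{finite} sum of elementary tensors having one leg in $P_0$, and then using $\tau_s$-convergence to make $\|s\|_{A/P\otimes_h A/P}$ small (no truncation of $t$ and no bundle structure needed); lower semicontinuity needs a separate argument, e.g.\ via the identification $\|(q_P\otimes q_P)(t)\|_h=\|\theta_{A/P}((q_P\otimes q_P)(t))\|_{cb}$ valid because $A/P$ is prime, together with a supremum formula for the cb-norm. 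As written, your proof asserts rather than establishes the one step that carries the weight.
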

\begin{proof} Since $\mathrm{Prim}(A)$ is Hausdorff, by \cite[4.4.5]{Ped} the functions $P \mapsto
\|a+P\|$ $(a \in A)$ are continuous on $\mathrm{Prim}(A)$.
Hence, the Jacobson topology and the $\tau_s$-topology restricted to
$\mathrm{Prim}(A)$ coincide. By \cite[Prop. 2]{Som2} for each $t \in A \otimes_h A$
the map
$$\mathrm{Id}(A) \times \mathrm{Id}(A) \to \R_+, \quad (I,J) \mapsto \|t + (I
\otimes_h A + A \otimes_h J)\|=\|q_I \otimes q_J(t)\|_h$$
is continuous for the product $\tau_s$-topology on $\mathrm{Id}(A) \times
\mathrm{Id}(A)$.
If $D$ denotes the diagonal of $\mathrm{Prim}(A)\times \mathrm{Prim}(A)$, the map $(P ,P)\mapsto\|q_P \otimes q_P(t)\|_h=|t|(P)$ is
continuous on $D$, and so the map $|t|$ is continuous on  $\mathrm{Prim}(A)$.
\end{proof}

\begin{remark}\label{comm} Let $A$ be a $C^*$-algebra.
It is easy to check that for all $J \in \mathrm{Id}(A)$ the following diagram
$$
\begin{CD}
A \otimes_h A @> \theta_A>> \mathrm{ICB}(A)\\
@V q_J \otimes q_J VV@V Q_J VV\\
A/J \otimes_h A/J @>\theta_{A/J}>> \mathrm{ICB}(A/J),
\end{CD}
$$
commutes, where $Q_J$ denotes the induced map $Q_J : \mathrm{ICB}(A) \to
\mathrm{ICB}(A/J)$,
\begin{equation}\label{Q_J}
  Q_J(T)(q_J(x)):=q_J(T(x)), \ \mathrm{for} \ \mathrm{all} \ T\in
\mathrm{ICB}(A) \ \mathrm{and} \ x\in A.
\end{equation}
Hence, if $\delta \in \mathrm{Der}(A) \cap \im \theta_A$ and $t \in A \otimes_h A$
such that $\delta=\theta_A(t)$, we have
\begin{equation}\label{D_P}
\delta_{J}=Q_J(\theta_A(t))=\theta_{A/J}(q_J \otimes q_J(t)).
\end{equation}
\end{remark}
\begin{remark}\label{pros} Let $A$ be a $C^*$-algebra and let $\delta \in \mathrm{Der}(A) \cap \im \theta_A$, with $\delta=\theta_A(t)$, for
 some tensor $t \in A \otimes_h A$. If we embed $A$ into the von Neumann
envelope $A^{**}$ of $A$, then by \cite[4.2.3]{Ara} $\delta$ can be extended (by ultraweak continuity) to the derivation $\delta^{**}$ on $A^{**}$.
It follows that $\delta^{**}=\theta_{A^{**}}(t)$ (where $A \otimes_h A \subseteq A^{**}\otimes_h A^{**}$, by the injectivity of the Haagerup tensor product), and hence $\tilde{\delta}=\delta^{**}|_{\tilde{A}}=\theta_{\tilde{A}}(t)$, where $\tilde{\delta}$ denotes the (unique) extension of $\delta$ to the derivation on the minimal
unitization $\tilde{A}$ of $A$.
\end{remark}
\begin{theorem}\label{Hauss} Let $A$ be a $C^*$-algebra with Hausdorff primitive
spectrum. Every derivation $\delta \in \im\theta_A$
is smooth and hence inner in $M(A)$. Moreover, if $A$ is also quasicentral, then $\delta$ is inner
in $A$.
\end{theorem}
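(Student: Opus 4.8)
The plan is to prove smoothness first, and then upgrade "inner in $M(A)$'' to "inner in $A$'' in the quasicentral case.

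\medskip

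\textbf{Step 1: Smoothness.} Write $\delta=\theta_A(t)$ for some $t\in A\otimes_h A$. By Remark \ref{comm}, for every $P\in\mathrm{Prim}(A)$ we have $\delta_P=\theta_{A/P}(q_P\otimes q_P(t))$, and since each primitive quotient $A/P$ is prime, Remark \ref{rs} (Mathieu's theorem) gives that $\theta_{A/P}$ is isometric. Hence
\begin{equation*}
|\delta|(P)=\|\delta_P\|=\|\theta_{A/P}(q_P\otimes q_P(t))\|_{cb}=\|q_P\otimes q_P(t)\|_h=|t|(P).
\end{equation*}
By Lemma \ref{tens}, $|t|$ is continuous on $\mathrm{Prim}(A)$ because $\mathrm{Prim}(A)$ is Hausdorff; therefore $|\delta|$ is continuous, i.e.\ $\delta$ is smooth. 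By \cite[2.4]{AEP} (or \cite[4.2.7]{Ara}), $\delta$ is then inner in $M(A)$, say $\delta=\delta_m$ for some $m\in M(A)$.

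\medskip

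\textbf{Step 2: Passing to the minimal unitization.} Assume now that $A$ is quasicentral. By Remark \ref{pros}, $\tilde\delta=\theta_{\tilde A}(t)$, where $\tilde\delta$ is the canonical extension of $\delta$ to $\tilde A$. Since $A$ is an ideal of $\tilde A$ with $A\cap J_\sim=J$ for each $J\in\mathrm{Id}(A)$, the primitive ideal space $\mathrm{Prim}(\tilde A)$ is $\mathrm{Prim}(A)$ together with the single extra point $A$, and $\tilde\delta_A=0$ on $\tilde A/A\cong\C$. Thus $|\tilde\delta|$ agrees with $|\delta|$ on $\mathrm{Prim}(A)$ and vanishes at the point $A$. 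I would argue that $|\tilde\delta|$ is still continuous on $\mathrm{Prim}(\tilde A)$: away from $A$ this is Step 1, and continuity at $A$ follows since, by Proposition \ref{unit}, $\mathrm{Prim}(A)$ is non-compact exactly when $A$ is non-unital, so a net in $\mathrm{Prim}(A)$ converging to the point $A$ eventually leaves every compact set, and one checks $|\delta|$ vanishes at infinity on $\mathrm{Prim}(A)$ (the function $P\mapsto\|q_P\otimes q_P(t)\|_h$ is dominated by norm functions of $A$, which vanish at infinity by \cite[4.4.4]{Ped}). Alternatively one may simply run Step 1 verbatim with $\tilde A$ in place of $A$, using Proposition \ref{A+}: since $A$ is quasicentral with $\mathrm{Prim}(A)$ Hausdorff, $\mathrm{Prim}(\tilde A)$ is Hausdorff, so Lemma \ref{tens} applies directly to $t\in\tilde A\otimes_h\tilde A$ and yields that $\tilde\delta$ is smooth on $\tilde A$, hence inner in $M(\tilde A)=\tilde A$; write $\tilde\delta=\delta_c$ with $c\in\tilde A$.

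\medskip

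\textbf{Step 3: Correcting the implementing element to lie in $A$.} We now have $\delta=\delta_c|_A$ for some $c=a_0+\lambda 1\in\tilde A$ with $a_0\in A$, $\lambda\in\C$. Clearly $\delta_c=\delta_{a_0}$ on $A$ already, so $\delta=\delta_{a_0}$ with $a_0\in A$ — but one must be careful that $\delta_{a_0}$ really equals $\delta$ as a map $A\to A$, which it does since $\lambda 1$ is central. Hence $\delta\in\mathrm{Inn}(A)$. (The role of quasicentrality here is twofold: via Proposition \ref{A+} it is what makes $\mathrm{Prim}(\tilde A)$ Hausdorff so that the machinery of Step 1 transfers to $\tilde A$, and via Lemma \ref{triv}/Proposition \ref{qc} it guarantees such inner derivations genuinely lie in $\im\theta_A$, keeping the statement non-vacuous.)

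\medskip

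The main obstacle I anticipate is Step 2: one must ensure that the smoothness argument is not lost when passing from $A$ to $\tilde A$ — i.e.\ that adjoining the point $A$ to $\mathrm{Prim}(A)$ does not destroy continuity of the norm function of $t$ — and the cleanest route is precisely to invoke Proposition \ref{A+} so that $\mathrm{Prim}(\tilde A)$ is Hausdorff and Lemma \ref{tens} applies wholesale to $\tilde A$, rather than checking continuity at the added point by hand. Once $\tilde\delta$ is implemented by an element of $\tilde A$, descending the implementing element to $A$ is routine because the scalar part acts trivially.
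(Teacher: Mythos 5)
Your proposal is correct and follows essentially the same route as the paper: the identity $|\delta|(P)=|t|(P)$ via the isometry of $\theta_{A/P}$ combined with Lemma \ref{tens} for smoothness, and then, in the quasicentral case, running the same argument on $\tilde{A}$ via Remark \ref{pros} and Proposition \ref{A+} and subtracting a scalar from the implementing element. The by-hand continuity check at the added point in Step 2 is unnecessary; your ``alternative'' route is exactly the paper's proof.
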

\begin{proof}
First, we will show that $\delta$  is smooth. Let
$t \in A \otimes_h A$ be a tensor such that $\delta= \theta(t)$, and let $P \in
\mathrm{Prim}(A)$. By (\ref{D_P}) we have $\delta_{P}=\theta_{A/P}(q_P \otimes
q_P(t))$. Since $A/P$ is primitive (simple in fact, since $\mathrm{Prim}(A)$ is
Hausdorff), $\theta_{A/P}$ is an isometry, and hence
$$|\delta|(P)=\|\delta_P\|=\|\delta_P\|_{cb}=\|\theta_{A/P}(q_P \otimes
q_P(t))\|_{cb}=\|q_P \otimes q_P(t)\|_h=|t|(P).$$
Since $P \in \mathrm{Prim}(A)$ was arbitrary, Lemma \ref{tens} implies
that $|\delta|=|t|$ is a continuous function on $\mathrm{Prim}(A)$, and hence,
$\delta$ is smooth. By \cite[2.4]{AEP} (or \cite[4.2.7]{Ara}) there exists an element $b \in M(A)$
such that $\delta=\delta_b$.

Now suppose that $A$ is also quasicentral, and let $\tilde{\delta}$ be
the (unique) extension of $\delta$ to the derivation on $\tilde{A}$. By Remark \ref{pros} we have $\theta_{\tilde{A}}(t)=\tilde{\delta}$.
Since $\mathrm{Prim}(\tilde{A})$ is also Hausdorff (Proposition \ref{A+}), by the first part of the
proof there exists $b \in \tilde{A}$ which induces $\tilde{\delta}$. If we
choose $\alpha \in \C$ such that $a:=b-\alpha 1 \in A$, then obviously $a$ also
induces $\tilde{\delta}$, and hence $\delta=\tilde{\delta}|_A$ is inner in $A$.
\end{proof}

\begin{question} Can one always (without the assumption of quasicentrality)
conclude that $\mathrm{Der}(A) \cap \im \theta_A \subseteq \mathrm{Inn}(A)$, when
$\mathrm{Prim}(A)$ is Hausdorff?
\end{question}

\begin{corollary}\label{kar1} Let $A$ be a $C^*$-algebra.
\begin{itemize}
\item[(i)] If $A$ is quasicentral and $\mathrm{Prim}(A)$ is Hausdorff then each inner derivation on $A$ is smooth.
\item[(ii)] If each inner derivation on $A$ is smooth then $\mathrm{Prim}(A)$ is Hasudorff.
\end{itemize}
Hence, for a quasicentral $C^*$-algebra $A$, $\mathrm{Prim}(A)$ is Hausdorff if and only if each inner derivation on $A$ is smooth.
\end{corollary}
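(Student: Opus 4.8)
The plan is to dispatch (i) immediately and to prove (ii) by contraposition. For (i): since $A$ is quasicentral, Lemma \ref{triv} gives $\mathrm{Inn}(A)\subseteq\im\theta_A$, and since $\mathrm{Prim}(A)$ is Hausdorff, Theorem \ref{Hauss} already tells us that every element of $\im\theta_A$ — in particular every inner derivation — is smooth; the displayed equivalence is then just (i) together with (ii). All the substance is in (ii), which I would argue contrapositively: assuming $\mathrm{Prim}(A)$ is not Hausdorff, I will exhibit a non-smooth inner derivation. Fix $P\ne Q$ in $\mathrm{Prim}(A)$ with no disjoint neighbourhoods, and form a net $(P_\gamma)$ with $P_\gamma\to P$ and $P_\gamma\to Q$ by directing the pairs of open neighbourhoods of $P$ and of $Q$ by reverse inclusion and picking a point of each (necessarily nonempty) intersection. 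The computational backbone is the identity
\[
|\delta_a|(R)=\|\delta_{a+R}^{A/R}\|=2\,d(a+R,\ \C 1_{M(A/R)})\qquad(a\in A,\ R\in\mathrm{Prim}(A)),
\]
valid because $A/R$ is primitive, so $M(A/R)$ is a unital primitive $C^*$-algebra to which Stampfli's formula (\ref{Stampfli}) applies, while the inner-derivation norm does not change on passing from $A/R$ to $M(A/R)$ (both sit inside $(A/R)^{**}$ with the same ultraweakly continuous extension of $\delta_{a+R}$). I record the consequence that $|\delta_a|(R)=0$ exactly when $a$ lies in the $\ast$-closed subspace $L_R:=\{b\in A:b+R\in\C 1_{M(A/R)}\}$, whose image in $A/R$ is $\C 1_{A/R}$ if $A/R$ is unital and $\{0\}$ otherwise.

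The next step is a dichotomy. If $L_P\ne L_Q$, I would take a self-adjoint element of the symmetric difference (given $b\in L_P\setminus L_Q$, one of $b+b^\ast$, $i(b-b^\ast)$ works), say $a=a^\ast\in L_P\setminus L_Q$; then $|\delta_a|(P)=0\ne|\delta_a|(Q)$, and since $(P_\gamma)$ converges to both $P$ and $Q$, the function $|\delta_a|$ cannot be continuous, so $\delta_a$ is not smooth. If instead $L_P=L_Q$, I claim this forces $A/P\cong A/Q\cong\C$ and $P+Q=A$: this follows from the description of $L_R$ above together with the elementary fact that a nonzero two-sided ideal of a $C^*$-algebra contained in the scalar multiples of the unit must contain the unit, hence equals the whole algebra, which is then $\cong\C$ (one checks the sub-cases according to whether $A/P$, $A/Q$ are unital and whether $P\subseteq Q$ or not). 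So everything reduces to the case $A/P\cong A/Q\cong\C$, $P+Q=A$, and this is where I expect the real difficulty: here $|\delta_a|(P)=|\delta_a|(Q)=0$ for \emph{every} $a\in A$, so non-smoothness has to be detected off $\{P,Q\}$, along the net.

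In that exceptional case, let $\chi_P,\chi_Q$ be the characters with kernels $P,Q$. Since $P+Q=A$, a Chinese-remainder argument gives $A/(P\cap Q)\cong\C^2$, so I can choose a positive contraction $a\in A$ with $\chi_P(a)=0$ and $\chi_Q(a)=1$ (lift the positive contraction $(0,1)$). Then $\|a+Q\|=1=\|a\|$, and since the norm functions $R\mapsto\|a+R\|$ are lower semicontinuous on $\mathrm{Prim}(A)$ \cite{Dix}, $\|a+P_\gamma\|\to1$; as $a+P_\gamma\ge 0$ this says the top of $\mathrm{Sp}_{M(A/P_\gamma)}(a+P_\gamma)$ tends to $1$. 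For the bottom of the spectrum I would use that the canonical surjection from the pure state space of $A$ onto $\mathrm{Prim}(A)$, sending a pure state $\varphi$ to $\ker\pi_\varphi$, is open and continuous \cite{Dix}: after passing to a subnet there are pure states $\psi_\gamma\to\chi_P$ (weak-$\ast$) with $\psi_\gamma$ vanishing on $P_\gamma$, and extending $\psi_\gamma$ to a state of $M(A/P_\gamma)$ gives $\min\mathrm{Sp}_{M(A/P_\gamma)}(a+P_\gamma)\le\psi_\gamma(a)\to\chi_P(a)=0$. Since $d(h,\C 1)=\tfrac12(\max\mathrm{Sp}(h)-\min\mathrm{Sp}(h))$ for a positive element $h$ of a unital $C^*$-algebra, the last two facts give
\[
|\delta_a|(P_\gamma)=\|a+P_\gamma\|-\min\mathrm{Sp}_{M(A/P_\gamma)}(a+P_\gamma)\ \longrightarrow\ 1\ \ne\ 0\ =\ |\delta_a|(P),
\]
so $|\delta_a|$ is discontinuous at $P$ and $\delta_a$ is not smooth, completing the contrapositive. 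The hard part, as indicated, is precisely this exceptional configuration $A/P\cong A/Q\cong\C$, $P+Q=A$: no individual primitive quotient witnesses the failure, and one must use the openness of the pure-state map to manufacture, along the net, pure states forcing $\min\mathrm{Sp}_{M(A/P_\gamma)}(a+P_\gamma)\to0$; once that is in place — and once the formula $|\delta_a|(R)=2\,d(a+R,\C 1_{M(A/R)})$ is established — the rest is routine.
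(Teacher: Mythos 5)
Your proof is correct, but for part (ii) it takes a genuinely different route from the paper's. The paper disposes of (ii) in two lines by quoting \cite[2.10]{AEP}: if $\delta_a$ ($a\in A_h$) is smooth, then $P\mapsto\|(a+z)+P^{\sim}\|$ is continuous for every $z\in Z(M(A))_h$, so taking $z=0$ makes every norm function $P\mapsto\|a+P\|$ continuous, and \cite[4.4.5]{Ped} then gives Hausdorffness. You instead prove the contrapositive from scratch: starting from an inseparable pair $P\neq Q$ and a net converging to both, you use Stampfli's formula in $M(A/R)$ to identify $|\delta_a|(R)$ with $2\,d(a+R,\C 1_{M(A/R)})$, split according to whether the ``scalar subspaces'' $L_P$ and $L_Q$ coincide, and in the degenerate case $A/P\cong A/Q\cong\C$, $P+Q=A$ you manufacture the discontinuity along the net via lower semicontinuity of $R\mapsto\|a+R\|$ at $Q$ together with pure states over the $P_\gamma$ converging to $\chi_P$ (openness of the pure-state surjection onto $\mathrm{Prim}(A)$). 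I checked the nontrivial steps --- the equality of $\|\delta_{a+R}\|$ on $A/R$ and on $M(A/R)$ via the bidual, the dichotomy forcing $A/P\cong A/Q\cong\C$ and $P+Q=A$ when $L_P=L_Q$, and the spectral estimates along the subnet --- and they all hold. What the paper's citation buys is brevity; what your argument buys is self-containedness (only Stampfli's formula, lower semicontinuity of the norm functions, and the standard open-map/net lemma are needed) and an explicit picture of where non-Hausdorffness produces a non-smooth inner derivation, including the delicate configuration of two inseparable characters that the black-box citation conceals. Part (i) and the concluding equivalence are handled exactly as in the paper.
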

\begin{proof} $(i)$. Since $A$ is quasicentral, by Lemma \ref{triv} $\mathrm{Inn}(A) \subseteq \im \theta_A$, so by Theorem \ref{Hauss} each inner derivation on $A$ is smooth.

$(ii)$. Let $a \in A_h$. Since $\delta_a$ is smooth, by \cite[2.10]{AEP} the function $P \mapsto \|(a+z)+P^{\sim}\|$ is continuous on $\mathrm{Prim}(A)$, for each $z \in Z(M(A))_h$, where $P^{\sim}$ $(P \in \mathrm{Prim}(A))$ denotes the unique primitive ideal of $M(A)$ such that $A \cap P^{\sim}=P$.  Hence, for
$z=0$, the function $P \mapsto \|a+P^\sim\|=\|a+P\|$ is continuous on $\mathrm{Prim}(A)$, and since $a\in A_h$ was arbitrary, by \cite[4.4.5]{Ped} $\mathrm{Prim}(A)$ is Hausdorff.
\end{proof}

The result of Corollary \ref{kar1} is not true in general for non-quasicentral
$C^*$-algebras, even if $\mathrm{Prim}(A)$ is Hausdorff and
every primitive quotient of $A$ is unital.
\begin{example} Let $A$ be a $C^*$-algebra consisting of all continuous functions
$a : [0,1] \to \MM_2(\C)$ such that $$a(1)=\left( \begin{array}{cc}
               \lambda(a) & 0 \\
              0 & 0 \\
               \end{array} \right), \quad \mathrm{for} \ \mathrm{some} \
\lambda(a) \in \C. $$
It is easy to check that every irreducible representation of $A$ is equivalent
to some representation $\pi_t$ $(t \in [0,1])$, where $\pi_t : a \mapsto a(t)$,
for $t \in  [0,1)$, and $\pi_1 : a \mapsto\lambda(a)$, and that
the map $t \mapsto P_t:=\ker\pi_t$ is a homeomorphism from $[0,1]$ onto
$\mathrm{Prim}(A)$. Since
$$Z(A)=\Big \{\left( \begin{array}{cc}
               f & 0 \\
              0 & f \\
               \end{array} \right) : \ f \in C_0([0,1)) \Big\} \subseteq P_1,$$
$A$ is not quasicentral. Let $a$ be an element of $A$ such that
$$a(t)=\left( \begin{array}{cc}
               1 & 0 \\
              0 & 0 \\
               \end{array} \right), \ \ \mathrm{for} \ \mathrm{all} \ t \in [0,1],$$
and let $\delta:=\delta_a$.
By Stampfli's formula (\ref{Stampfli}) we have
$$\|\delta_ {P_t}\|=2d(a+P_t,\C)=\left\{\begin{array}{cl}
1, & \textrm{ if } 0\leq t  <1 ,\\
0, & \textrm{if } t=1
\end{array}\right.$$
and hence, $\delta$ is not smooth.

\end{example}

\section{An example of a $C^*$-algebra with outer elementary derivations}

In this section we shall give an example of a unital $C^*$-algebra $A$ which has
an outer derivation $\delta \in \im \theta_A$. For this $C^*$-algebra $A$ the
space $\mathrm{Inn}(A)$ is not closed in the space $\mathrm{Der}(A)$.
By \cite[4.6]{Som3} this happens if and only if $\mathrm{Orc}(A)=\infty,$ where
$\mathrm{Orc}(A)$ is a constant arising from a certain graph structure on
$\mathrm{Prim}(A)$ which is defined as follows.

We say that two primitive ideals $P ,Q \in \mathrm{Prim}(A)$ are
\textit{adjacent} (and write $P\sim Q$) if $P$ and $Q$ cannot be separated by
disjoint open
subsets of $\mathrm{Prim}(A)$. A \textit{path} of length $n$ from $P$ to $Q$ is
a sequence of points $P=P_0,P_1, \ldots, P_n=Q$ such that $P_{i-1} \sim P_i$,
for all $1\leq i \leq n$. The \textit{distance} $d(P,Q)$ from $P$ to $Q$ is defined as follows:
\begin{itemize}
\item[-] If $P=Q$, $d(P,Q)=d(P,P):=1$,
\item[-] If $P\neq Q$ and there exists a path from $P$ to $Q$, then $d(P,Q)$ is equal to the minimal length of a path from
$P$ to $Q$.
\item[-] If there is no path from $P$ to $Q$, $d(P,Q):=\infty$.
\end{itemize}
The \textit{connecting
order} $\mathrm{Orc}(A)$ of $A$ is defined by
$$\mathrm{Orc}(A):= \sup\{d(P,Q) : \ P,Q \in \mathrm{Prim}(A) \ \mathrm{such} \
\mathrm{that} \ d(P,Q) < \infty \}.$$
Note that $\mathrm{Orc}(A)=1$ if $\mathrm{Prim}(A)$ is Hausdorff, but that the converse does not hold in general\footnote{As noted in \cite{Som2}, $\mathrm{Orc}(A)=1$ if and only if every Glimm ideal of $A$ is $2$-primal.}.

\medskip
We shall also use the following notation. Let $B$ be a unital $C^*$-algebra and
let $A \subseteq B$ be a $C^*$-subalgebra of $B$. An \textit{elementary operator} on $B$
\textit{with the coefficients} in $A$ is a map $T: B \to B$ which can be expressed in the
form
$$
T=\sum_{k=1}^d a_k \odot b_k, \quad  \mathrm{for} \ \mathrm{some } \ a_k,b_k \in
A \ (1\leq k \leq d),
$$
where
$$\Big(\sum_{k=1}^d a_k \odot b_k\Big)(x):=\theta_B\Big( \sum_{k=1}^d a_k \otimes b_k\Big)(x)=\sum_{k=1}^d a_k x b_k \quad (x \in
B).
$$
The space of all elementary operators on $B$ with the coefficients in $A$ is
denoted by $\E_A(B)$. If $A=B$ then (as usual) we write $\E(B)$ for $\E_B(B)$;
the set of all elementary operators on $B$. We also denote by $\E(B\rightarrow A)$ the
subspace of all $T \in \E(B)$ such that $T(B) \subseteq A$.

\begin{example}\label{orc} Let $\tilde{X}:=[1, \infty]$ be the Alexandroff
compactification of the interval $X:= [1, \infty)$, let
$B:=C(\tilde{X},\MM_2(\C))$, and let $A$ be a $C^*$-subalgebra of $B$ which
consists of all $a \in B$ such that
$$a(n)=\left( \begin{array}{cc}
               \lambda_n(a) & 0 \\
              0 & \lambda_{n+1}(a)\\
               \end{array} \right) \  (n \in \N) \ \ \mathrm{and} \ \ a(\infty)=\left( \begin{array}{cc}
               \lambda(a) & 0 \\
              0 & \lambda(a)\\
               \end{array} \right),$$
for some convergent sequence $(\lambda_n(a))$ of complex
numbers with $\lim_n \lambda_n(a)=\lambda(a)$.
Then $\mathrm{Orc}(A)=\infty$ and $\E(A)$ is a cb-closed subspace of
$\mathrm{ICB}(A)$. Consequently, $A$ has outer elementary derivations\footnote{A derivation $\delta\in \mathrm{Der}(A)$ is said to be elementary if
$\delta$ is an elementary operator on $A$.}.
\end{example}
This example is just a slightly modified version of the $C^*$-algebra
$A(\infty)$ in \cite[2.8]{Som3}. It is easy to check that
$$\mathrm{Prim}(A)=\{P_t :  \ t \in X \setminus \N\} \cup \{Q_n : \ n \in \N\}
\cup\{ Q\},$$
where $P_t$ $(t \in X \setminus \N)$ denotes a kernel of $a \mapsto a(t)$,
$Q_n$ $(n \in \N)$ denotes a kernel of $a \mapsto \lambda_n(a)$, and $Q$ denotes
the kernel of $a \mapsto \lambda(a)$. Also note that the points $P_t$ $(t \in X
\setminus \N)$ and $Q$ are separated\footnote{We say that a point $P\in \mathrm{Prim}(A)$ is separated if whenever $Q \in \mathrm{Prim}(A)$ and $P \nsubseteq Q$ then there exist disjoint open neighborhoods of $P$ and $Q$ in $\mathrm{Prim}(A)$.} in $\mathrm{Prim}(A)$, while $Q_i \sim Q_j$
if and only if $|i-j|\leq 1$. It follows that $d(Q_1,Q_{n+1})=n$, for all $n \in
\N$, and hence $\mathrm{Orc}(A)=\infty$. By \cite[4.6]{Som3}
$\mathrm{Inn}(A)$ is not closed in $\mathrm{Der}(A)$. One can also check this by
direct calculations. For example, it is not difficult to see that for each
function $f \in C_0(X)$ such that the series $\sum_{n=1}^\infty f(n)$ does not
converge, then the element $$b=\left( \begin{array}{cc}
               f & 0 \\
              0 & 0\\
               \end{array} \right) \in B$$
derives $A$ (that is $ab-ba \in A$, for all $a \in A$) and the induced
derivation (which is obviously not inner in $A$) is in the
closure of $\mathrm{Inn}(A)$. To prove that $\E(A)$ is closed in
$\mathrm{ICB}(A)$ we shall first need some additional technical results which will be stated in a more general setting.
\medskip

Let $A$ be a $C^*$-algebra. Recall that $A$ is called $n$-homogeneous $(n \in
\N)$ if $\dim \pi =n$, for all $[\pi] \in \hat{A}$. Then by \cite[3.2]{Fell}
$\Delta:=\mathrm{Prim}(A)$ is a (locally compact) Hausdorff space and $A$ is
isomorphic to the $C^*$-algebra $\Gamma_0(E)$ of all continuous sections
vanishing at infinity of a locally trivial $C^*$-bundle $E$ over $\Delta$ with
fibres isomorphic to $\MM_n(\C)$. If the base space $\Delta$ of $E$
admits a finite open covering
$\{U_j\}$ such that each $E|_{U_j}$ is trivial (as a $C^*$-bundle) we say that
$E$ (and hence $A$) is of \textit{finite type}.

If $$\sup\{\dim \pi : \ [\pi] \in \hat{A}\}=n$$
then we say that $A$ is $n$-\textit{subhomogeneous}. In this case $$J:=\bigcap
\{\ker \pi : \ [\pi] \in \hat{A} \ \mathrm{such} \ \mathrm{that} \  \dim \pi < n
\}$$
is called $n$-\textit{homogeneous ideal} of $A$, and is the largest ideal of $A$
which is $n$-homogeneous, as a $C^*$-algebra.
\begin{remark} If $A$ is $n$-subhomogeneous $C^*$-algebra, note that for each operator $T \in \im \theta_A$ we have $$\|T\|_{cb} \leq n \|T\|.$$
Indeed, this can be easily seen by using the formulas
 \begin{equation}\label{opformula}
\|T\|=\sup\{\|T_P\| : \ P \in \mathrm{Prim}(A)\}, \quad \mathrm{and} \quad
\|T\|_{cb}=\sup\{\|T_P\|_{cb} : \ P \in \mathrm{Prim}(A)\},
\end{equation}
(see \cite[5.3.12]{Ara}) and noting that each operator $S : \MM_m(\C) \to \MM_m(\C)$ is completely bounded (elementary in fact) with
$\|S\|_{cb} \leq m\|S\|$ (see \cite[Exercise\,3.11]{Paul}). Hence, if $A$ is subhomogeneous, we do not have to specify which norm do we
consider when speaking about closures of $\im \theta_A$ or $\E(A)$.
\end{remark}

\begin{lemma}\label{E(B;J)} Let $B$ be a unital $n$-homogeneous $C^*$-algebra,
and let $J \in \mathrm{Id}(B)$. Then
$\E_J(B)=\E(B\rightarrow J)$. In particular, $\E_J(B)$ is a closed subspace of $\E(B)$.

\end{lemma}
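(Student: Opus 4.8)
The plan is to prove the nontrivial inclusion $\E(B\rightarrow J)\subseteq\E_J(B)$; the reverse inclusion is immediate, since $J$ is an ideal and so $\sum_k a_kxb_k\in J$ whenever $a_k,b_k\in J$ and $x\in B$. By \cite[3.2]{Fell} we may identify the unital $n$-homogeneous algebra $B$ with the algebra $\Gamma(E)$ of continuous sections of a locally trivial $\MM_n(\C)$-bundle $E$ over the compact Hausdorff space $\Delta:=\mathrm{Prim}(B)$; then $J$ consists of the sections vanishing on $F:=\Delta\setminus U$ for a suitable open $U\subseteq\Delta$, and $Z(B)=C(\Delta)$. For $t\in\Delta$ write $T_t$ for the linear map on the fibre $E_t\cong\MM_n(\C)$ induced by $T$, so $T_t(\xi)=\sum_k a_k(t)\,\xi\,b_k(t)$ if $T=\sum_k a_k\odot b_k$; then $T(x)(t)=T_t(x(t))$ and $t\mapsto T_t$ is continuous in every local trivialisation. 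The first point to record is that if $T\in\E(B\rightarrow J)$ then $T_t=0$ for all $t\in F$: given $t_0\in F$ and $\mu\in E_{t_0}$, local triviality and a scalar bump function produce $a\in B$ with $a(t_0)=\mu$, and then $T(a)\in J$ forces $T_{t_0}(\mu)=T(a)(t_0)=0$.

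For the main step, choose a finite open cover $V_1,\dots,V_m$ of $\Delta$ over which $E$ is trivial, a partition of unity $f_1,\dots,f_m\in C(\Delta)$ with $0\le f_i\le1$, $\sum_i f_i=1$ and $\supp f_i\subseteq V_i$, and write $T=\sum_i f_iT$, where each $f_iT=\sum_k(f_ia_k)\odot b_k$ lies in $\E(B)$. Fix a trivialisation of $E|_{V_i}$ with constant matrix-unit sections $e^{(i)}_{pq}$. Since the $n^4$ maps $\xi\mapsto e^{(i)}_{pq}(t)\,\xi\,e^{(i)}_{rs}(t)$ form a basis of the space of linear maps on $E_t$, over $V_i$ the operator $f_iT$ has a unique expansion with coefficient functions $g^{(i)}_{pqrs}\in C(V_i)$, namely $(f_iT)(x)(t)=\sum_{p,q,r,s}g^{(i)}_{pqrs}(t)\,e^{(i)}_{pq}(t)\,x(t)\,e^{(i)}_{rs}(t)$ for $t\in V_i$, with the $g^{(i)}_{pqrs}$ continuous by continuity of $t\mapsto T_t$ and finite-dimensionality. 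Since $(f_iT)_t=f_i(t)T_t=0$ for $t\in F$, these functions vanish on $F\cap V_i$; being supported in $\supp f_i\subseteq V_i$, they extend by zero to continuous functions on $\Delta$ vanishing on $F$, i.e. to elements of the ideal $C_0(U)$ of $Z(B)=C(\Delta)$. Now factor each, scalar-wise, as $g^{(i)}_{pqrs}=\beta^{(i)}_{pqrs}\,\gamma^{(i)}_{pqrs}$ with $\beta^{(i)}_{pqrs}:=|g^{(i)}_{pqrs}|^{1/2}$ and $\gamma^{(i)}_{pqrs}$ equal to $g^{(i)}_{pqrs}/|g^{(i)}_{pqrs}|^{1/2}$ off the zero set and $0$ on it; both lie in $C_0(U)$ and are supported in $\supp f_i\subseteq V_i$, hence $\beta^{(i)}_{pqrs}e^{(i)}_{pq}$ and $\gamma^{(i)}_{pqrs}e^{(i)}_{rs}$ extend by zero to elements of $J$. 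Because these scalar functions are central, a fibrewise check gives $f_iT=\sum_{p,q,r,s}\bigl(\beta^{(i)}_{pqrs}e^{(i)}_{pq}\bigr)\odot\bigl(\gamma^{(i)}_{pqrs}e^{(i)}_{rs}\bigr)$, and summing over $i$ exhibits $T$ as a finite sum of elementary operators with all coefficients in $J$; thus $T\in\E_J(B)$.

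The closedness assertion is then automatic: $\E(B\rightarrow J)=\{T\in\E(B):T(B)\subseteq J\}$ is closed in $\E(B)$, because $T_m\to T$ in $\E(B)$ gives $T(x)=\lim_m T_m(x)\in J$ for every $x$ ($J$ being closed), and we have just shown $\E_J(B)=\E(B\rightarrow J)$. I expect the main obstacle to be bookkeeping in the patching step: confirming that the locally defined $g^{(i)}_{pqrs}$ are continuous and extend by zero across $\partial V_i$ (this is where $\supp f_i\subseteq V_i$ is used), and that after the scalar factorisation the terms $\bigl(\beta^{(i)}_{pqrs}e^{(i)}_{pq}\bigr)\odot\bigl(\gamma^{(i)}_{pqrs}e^{(i)}_{rs}\bigr)$ really do reassemble to $f_iT$ on all of $\Delta$, the two regimes $t\in V_i$ and $t\notin\supp f_i$ overlapping consistently. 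The only genuinely substantive idea is that the necessary condition ``$T_t=0$ for $t\notin U$'' can be \emph{realised} by coefficients vanishing off $U$, and this rests on nothing more than the scalar factorisation $g=|g|^{1/2}\cdot\bigl(g/|g|^{1/2}\bigr)$ inside $C_0(U)$.
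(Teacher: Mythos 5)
Your proof is correct and follows essentially the same route as the paper's: localize via a finite partition of unity subordinate to a trivializing cover, expand in matrix units to see that the scalar coefficient functions must vanish on the closed set dual to $J$, and then split each coefficient by the factorization $g=|g|^{1/2}\cdot\bigl(g/|g|^{1/2}\bigr)$ so that both halves land in $J$. The only difference is presentational: the paper reduces outright to the trivial-bundle case and detects the vanishing of the coefficients by applying $T$ to the matrix units, whereas you keep the patching explicit and derive it from the fibrewise vanishing $T_t=0$ on $F$ -- these are equivalent.
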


\begin{proof} Let $E$ be a locally trivial $C^*$-bundle $E$ over
$\Delta:=\mathrm{Prim}(A)$ (which is compact since $A$ is unital) whose fibres
are isomorphic to $\MM_n(\C)$ such that
$A=\Gamma(E)$ (we identify $A$ with $\Gamma(E)$ via the fixed isomorphism). By
compactness of $\Delta$ and local triviality of $E$, there exists a
finite open cover $\{U_j\}_{1\leq j \leq m}$ of $\Delta$
such that each $E|_{\overline{U_j}}$ is trivial. Using a finite partition of
unity, subordinated to the cover
$\{U_j\}_{1\leq j \leq m}$ one can reduce the proof to the situation when $m=1$,
so we may assume $E$ is trivial. Then $B=C(\Delta, \MM_n(\C))$, and since $J$ is
an ideal of $B$,
there is a closed subset $Y$ of $\Delta$ such that
$$J=\{a \in B : \ a|_Y=0\}.$$
Let $(E_{i,j})_{1\leq i,j \leq n}$ denote the standard matrix units of $\MM_2(\C)$ considered as constant elements of $B=C(\Delta,
\MM_n(\C))$, and let $T \in \E(B\rightarrow J)$. Then $T$ can be written in the form
\begin{equation}\label{koefic}
T=\sum_{i,j,p,q=1}^n f_{i,j,p,q} E_{i,j} \odot E_{p,q},
\end{equation}
for some functions $f_{i,j,p,q} \in C(\Delta)\cong Z(B)$. Let $1\leq r,s \leq n$
be the fixed numbers. Since $T(B) \subseteq J$, we have
$$T(E_{r,s})=\sum_{i,j,p,q=1}^n f_{i,j,p,q} E_{i,j} E_{r,s}
E_{p,q}=\sum_{i,q=1}^n f_{i,r,s,q}E_{i,q} \in J.$$
Thus, $f_{i,r,s,q}|_{ Y}=0$, for all $i,q=1, \ldots , n$. Since $r,s$ were
arbitrary, we have $$f_{i,j,p,q}|_ {Y}=0, \quad \mathrm{for} \ \mathrm{all} \
1\leq i,j,p,q\leq n$$
Note that every function $f \in C(\Delta)$ with the property $f|_{Y}=0$ can be
factorized in the form $f=gh$, where
 $g,h\in C(\Delta)$ such that $g|_{Y}=0$ and $h|_{Y}=0$ (for example, put
$g:=\sqrt{|f|}$ i $h:=f/\sqrt{|f|}$).
If we apply this factorization to the functions $f_{i,j,p,q}$, $$f_{i,j,p,q}=
g_{i,j,p,q}\cdot h_{i,j,p,q},$$ then it follows from (\ref{koefic})  that
$$T=\sum_{i,j,p,q=1}^n f_{i,j,p,q} E_{i,j} \odot E_{p,q}=\sum_{i,j,p,q=1}^n
g_{i,j,p,q}E_{i,j} \odot h_{i,j,p,q}E_{p,q}.$$
Thus $T \in \E_J(B)$.
\end{proof}

\begin{remark}\label{exact} Suppose that $$0 \longrightarrow X \longrightarrow Y \stackrel{q}\longrightarrow Z \longrightarrow 0$$
is an exact sequence of normed spaces, where $q$ is a bounded linear map. If $q$ is also open, note that $Y$ is a Banach space if and only if $X$ and $Z$ are Banach spaces. Also note that if
$\dot{Y} \subseteq Y$ and $\dot{Z} \subseteq Z$  are (not necessarily closed) subspaces such that $q(\dot{Y})=\dot{Z}$ and which fit into the exact sequence
$$0 \longrightarrow X \longrightarrow \dot{Y} \stackrel{\dot{q}}\longrightarrow \dot{Z} \longrightarrow 0,$$
where $\dot{q}:=q|_{\dot{Y}}$ (and hence $\dot{Y}=\dot{q}^{-1}(\dot{Z})=q^{-1}(\dot{Z})$), then $\dot{q}$ is open whenever $q$ is open.
\end{remark}
\begin{lemma}\label{str} Suppose that $A$ is a unital $n$-subhomogeneous
$C^*$-algebra with $n$-homogeneous ideal $J$ which is of finite type. If $B$ is any unital $n$-homogeneous $C^*$-algebra
which contains $A$ and such that $J$ is the essential ideal of $B$, then $\E(A)$ is closed subspace of $\mathrm{ICB}(A)$ if and
only if $\E_{A/J}(B/J)$ is a closed subspace of $\mathrm{ICB}(B/J)$.
\end{lemma}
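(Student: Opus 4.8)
The plan is to transfer the problem from $A$ to the homogeneous overalgebra $B$, where elementary operators are rigid, and then to peel off the ideal $J$ by means of the exact-sequence machinery of Remark \ref{exact}. Since $J$ is essential in $B$ it is also essential in $A$, so $\mathrm{Prim}(J)$ is dense in both $\mathrm{Prim}(A)$ and $\mathrm{Prim}(B)$; moreover for $P \in \mathrm{Prim}(J)$ the simple $C^*$-algebra $J/P \cong \MM_n(\C)$ is a nonzero ideal of the primitive quotients $A/P$ and $B/P$, which forces $A/P \cong B/P \cong \MM_n(\C)$. For $S = \theta_B(t) \in \E_A(B)$ with $t \in B \otimes_h B$, Lemma \ref{tens} applied in $B$ (whose primitive spectrum is Hausdorff) together with Remark \ref{comm} shows that $P \mapsto \|S_P\|_{cb} = |t|(P)$ is continuous on $\mathrm{Prim}(B)$, because each $B/P \cong \MM_n(\C)$ is prime and hence $\theta_{B/P}$ is isometric. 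Combining this continuity with the density of $\mathrm{Prim}(J)$, with (\ref{opformula}), and with the identification of primitive quotients above yields $\|S\|_{cb} = \sup_{P \in \mathrm{Prim}(J)} \|S_P\|_{cb} \le \|S|_A\|_{cb} \le \|S\|_{cb}$. As $A$ is a $C^*$-subalgebra of $B$, every $S \in \E_A(B)$ preserves $A$ and restricts to $S|_A \in \E(A)$; the displayed equalities show that $S \mapsto S|_A$ is isometric, and it is clearly onto $\E(A) = \E_A(A)$ (extend the coefficients of an element of $\E(A)$ from $A$ to $B$). Consequently $\E(A)$ is closed in $\mathrm{ICB}(A)$ if and only if $\E_A(B)$ is complete, that is, if and only if $\E_A(B)$ is closed in the Banach space $\mathrm{ICB}(B)$.

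Next, $B$ is of finite type: the compact Hausdorff space $\mathrm{Prim}(B)$ is the union of the open dense set $\mathrm{Prim}(J)$, which has a finite trivialising cover because $J$ is of finite type, and of the closed set $\mathrm{Prim}(B/J)$, which has a finite trivialising cover by local triviality and compactness. Every ideal-preserving completely bounded map on a unital $n$-homogeneous $C^*$-algebra of finite type is elementary (by locality of such maps together with a finite partition of unity); applying this to $B$ and to the quotient $B/J$, which is again $n$-homogeneous of finite type, both $\E(B) = \mathrm{ICB}(B)$ and $\E(B/J) = \mathrm{ICB}(B/J)$ are Banach spaces. This is the only place where the hypothesis that $J$ be of finite type enters; without it $\E(B)$ need only be dense in $\mathrm{ICB}(B)$.

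Now I split off $J$. By Lemma \ref{E(B;J)}, $\E(B \rightarrow J) = \E_J(B)$ is a closed subspace of $\E(B) = \mathrm{ICB}(B)$, and the map $Q_J$ restricts to an exact sequence $0 \longrightarrow \E(B \rightarrow J) \longrightarrow \E(B) \stackrel{Q_J}{\longrightarrow} \E(B/J) \longrightarrow 0$: surjectivity holds by lifting coefficients, and the kernel $\{S \in \E(B) : S(B) \subseteq J\}$ equals $\E(B \rightarrow J)$, which by Lemma \ref{E(B;J)} may be represented with coefficients in $J \subseteq A$. Since $\E(B)$ and $\E(B/J)$ are Banach spaces, $Q_J|_{\E(B)}$ is open. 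I claim that $\E_A(B) = (Q_J|_{\E(B)})^{-1}\bigl(\E_{A/J}(B/J)\bigr)$ and $Q_J(\E_A(B)) = \E_{A/J}(B/J)$: the inclusion $\subseteq$ and this surjectivity are clear by lifting coefficients, while if $S \in \E(B)$ with $Q_J(S) \in \E_{A/J}(B/J)$, then lifting a finite $A/J$-representation of $Q_J(S)$ to some $S' \in \E_A(B)$ gives $S - S' \in \ker Q_J = \E(B \rightarrow J) \subseteq \E_A(B)$, so $S \in \E_A(B)$. By Remark \ref{exact} the restriction $q := Q_J|_{\E_A(B)}$ is therefore open, and $0 \longrightarrow \E(B \rightarrow J) \longrightarrow \E_A(B) \stackrel{q}{\longrightarrow} \E_{A/J}(B/J) \longrightarrow 0$ is exact with $q$ open and $\E(B \rightarrow J)$ a Banach space.

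Applying Remark \ref{exact} to this last sequence, $\E_A(B)$ is a Banach space if and only if $\E_{A/J}(B/J)$ is, that is (being a subspace of the Banach space $\mathrm{ICB}(B/J)$) if and only if $\E_{A/J}(B/J)$ is closed in $\mathrm{ICB}(B/J)$; together with the first paragraph this proves the lemma. The main obstacles are the norm identity $\|S|_A\|_{cb} = \|S\|_{cb}$ of the first paragraph — which rests on the continuity of $P \mapsto \|S_P\|_{cb}$ on the Hausdorff space $\mathrm{Prim}(B)$ and on the density of $\mathrm{Prim}(J)$ — and the identity $\E_A(B) = (Q_J|_{\E(B)})^{-1}(\E_{A/J}(B/J))$, whose proof crucially uses Lemma \ref{E(B;J)} to guarantee that operators in $\ker Q_J$ already have their coefficients in $J$; the remaining bookkeeping with Remark \ref{exact} is then purely formal.
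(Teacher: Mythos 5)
Your proof is correct and follows essentially the same route as the paper: identify $\E(A)$ isometrically with $\E_A(B)$, invoke Magajna's theorem to make $\E(B)$ and $\E(B/J)$ Banach spaces, and push the exact sequence $0\to\E_J(B)\to\E_A(B)\to\E_{A/J}(B/J)\to 0$ through Lemma \ref{E(B;J)} and Remark \ref{exact}. The only differences are cosmetic: you establish the restriction isometry by a continuity-plus-density argument on $\mathrm{Prim}(B)$ where the paper simply cites Kaplansky's density theorem, and you spell out the finite-type and preimage verifications (that $B$ is of finite type and that $\E_A(B)=\dot{Q}_J^{-1}(\E_{A/J}(B/J))$) which the paper leaves implicit.
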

\begin{proof} First note that $J$ is also essential in $A$. Also note that such $B$ exists, since by \cite{Mag} $M(J)$ is
$n$-homogeneous, and $A \subseteq M(J)$, since $J$ is essential in $A$. By
Kaplansky's density theorem the restriction map $T \mapsto T|_A$  is an
isometric isomorphism from $\mathrm{E}_A(B)$ onto $\E(A)$. Hence, we may
identify $\E(A)$ with $\E_A(B)$. Let $q_J : B \to B/J$ be a quotient map, and
let $\dot{Q}_J$ be the restriction of the induced contraction $Q_J$ to $\E(B)$
(see (\ref{Q_J})). Obviously $\dot{Q}_J(\E(B))=\E(B/J)$ and the kernel of
$\dot{Q}_J$ is the set $\E(B\rightarrow J)$, which can be identified with the set $\E_J(B)$,
by Lemma \ref{E(B;J)}. Since $B$ and $B/J$ are unital
homogeneous $C^*$-algebras, by \cite[1.1]{Mag} we have equalities
$\mathrm{ICB}(B)=\E(B)$ and $\mathrm{ICB}(B/J)=\E(B/J)$. Thus $\E(B)$ and
$\E(B/J)$ are Banach spaces, and by the open mapping theorem, $\dot{Q}_J$ is an
open map. Since $\dot{Q}_J(\E_A(B))=\E_{A/J}(B/J)$, note that the exact sequence
$$0 \longrightarrow \E_J(B) \longrightarrow \E(B)
\stackrel{\dot{Q}_J}\longrightarrow \E(B/J) \longrightarrow 0$$
of Banach spaces induces the exact sequence of normed spaces
 $$0 \longrightarrow \E_J(B) \longrightarrow \E_A(B)
\stackrel{\ddot{Q}_J}\longrightarrow \E_{A/J}(B/J) \longrightarrow 0,$$
where $\ddot{Q}_J$ denotes a restriction of $\dot{Q}_J$ to the set
$\E_A(B)$, since $\ker \ddot{Q}_J = \ker \dot{Q}_J= \E_J(B)$. By Remark \ref{exact}, $\ddot{Q}_J$
is also an open map, and since $\E_J(B)$ is a Banach space (Lemma \ref{E(B;J)}), $\E_A(B)$ is a Banach space if and only if $\E_{A/J}(B/J)$
is a Banach space.
\end{proof}
Now we prove the second claim of the example \ref{orc}.

\begin{lemma} Let $A$ and $B$ be the $C^*$-algebras from the Example \ref{orc}. Then
$\E(A)$ is a closed subspace of $\mathrm{ICB}(A)$.
\end{lemma}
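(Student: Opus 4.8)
The plan is to combine Lemma~\ref{str} with an explicit description of $\E_{A/J}(B/J)$, where $J$ denotes the $2$-homogeneous ideal of $A$. First one checks that $J=\{a\in A:\ a|_{\N\cup\{\infty\}}=0\}=C_0([1,\infty)\setminus\N,\MM_2)$, which is $2$-homogeneous and of finite type (the bundle is trivial), and that $J$ is essential both in $A$ and in the unital $2$-homogeneous algebra $B=C([1,\infty],\MM_2)$. By Lemma~\ref{str} it then suffices to show that $\E_{A/J}(B/J)$ is closed in $\mathrm{ICB}(B/J)$. Restricting sections to $\N\cup\{\infty\}$ identifies $B/J$ with $C(Y,\MM_2)$, where $Y$ is the one-point compactification of $\N$, and identifies $A/J$ with the commutative ``shift-diagonal'' subalgebra $\{b\in C(Y,\MM_2):\ b(n)=\mathrm{diag}(\mu_n,\mu_{n+1})\ (n\in\N),\ b(\infty)=\mu_\infty I\}\cong C(Y)$ (here $(\mu_n)$ runs over convergent sequences and $\mu_\infty=\lim\mu_n$). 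Since $B/J$ is unital and $2$-homogeneous, \cite[1.1]{Mag} gives $\mathrm{ICB}(B/J)=\E(B/J)\cong C(Y,\E(\MM_2))$.

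Next I would compute the symbol of an elementary operator with coefficients in $A/J$: if $a,b\in A/J$ correspond to the sequences $\mu,\nu$, then $a\odot b$ acts at a finite point $n$ as the Schur (Hadamard) multiplier by $\left(\begin{smallmatrix}\mu_n\nu_n & \mu_n\nu_{n+1}\\ \mu_{n+1}\nu_n & \mu_{n+1}\nu_{n+1}\end{smallmatrix}\right)$ and at $\infty$ as $\mu_\infty\nu_\infty\cdot\mathrm{id}$. Hence $\E_{A/J}(B/J)$ is contained in the subspace $\mathcal M$ of $C(Y,\E(\MM_2))$ consisting of those $T$ such that $T(n)$ is a Schur multiplier at every finite $n$, $T(\infty)\in\C\,\mathrm{id}$, and the $(2,2)$-coefficient of $T(n)$ equals the $(1,1)$-coefficient of $T(n+1)$ for all $n$ (the last ``shift compatibility'' reflecting that $\mu_{n+1}\nu_{n+1}$ is simultaneously the $(2,2)$-coefficient at $n$ and the $(1,1)$-coefficient at $n+1$). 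All the defining conditions of $\mathcal M$ are linear and closed, so $\mathcal M$ is a closed subspace of $\mathrm{ICB}(B/J)$; it therefore remains to prove the reverse inclusion $\mathcal M\subseteq\E_{A/J}(B/J)$.

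This reverse inclusion is the main obstacle. Given $T\in\mathcal M$ with coefficients $(x_n,y_n,z_n,x_{n+1})$ at finite $n$ and $\lambda\,\mathrm{id}$ at $\infty$, subtracting $\lambda\,(1\odot 1)=\lambda\,\mathrm{id}_{B/J}$ reduces to the case $\lambda=0$, so that $x,y,z\to0$. A short computation shows that writing $T=\sum_{k=1}^{3}a_k\odot b_k$ with $a_k,b_k\in A/J$ is equivalent to producing \emph{convergent} sequences $p_n=(\mu^1_n,\mu^2_n,\mu^3_n)$ and $q_n=(\nu^1_n,\nu^2_n,\nu^3_n)$ in $\C^3$ with $p_n\cdot q_{n-1}=z_{n-1}$, $p_n\cdot q_n=x_n$ and $p_n\cdot q_{n+1}=y_n$ for all $n$; the remaining relation $p_{n+1}\cdot q_{n+1}=x_{n+1}$ is then automatic. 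The difficulty is precisely convergence: the pointwise solutions need not assemble into convergent sequences, which is what is needed for $a_k,b_k\in A/J$. I would resolve this by choosing the $q_n$ adaptively. Put $M_n:=\sup_{k\ge n}\max(|x_k|,|y_k|,|z_k|)$ and $\gamma_n:=\sqrt{M_{n-1}}+2^{-n}$ (a strictly positive, non-increasing null sequence), and set $q_n:=(1,\alpha_n,\beta_n)$ with $\alpha_n:=\sum_{j<n}\gamma_j\cos(j\pi/2)$ and $\beta_n:=\sum_{j<n}\gamma_j\sin(j\pi/2)$; then $\alpha,\beta$ converge (alternating series test), and one computes $\det[q_{n-1};q_n;q_{n+1}]=\gamma_{n-1}\gamma_n>0$, so $p_n$ is uniquely determined by the three linear equations. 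Since every $2\times2$ minor of $[q_{n-1};q_n;q_{n+1}]$ is $O(\gamma_{n-1})$ (each such minor involves either two nearly equal rows or the column of $1$'s), one obtains $\|[q_{n-1};q_n;q_{n+1}]^{-1}\|=O(\gamma_n^{-1})$, whence $|p_n|=O(M_{n-1}/\gamma_n)=O(\sqrt{M_{n-1}})\to0$. Thus $p_n$ converges (to $0$), the corresponding $a_k,b_k\in A/J$ exist, and by the symbol computation $T=\sum_{k=1}^{3}a_k\odot b_k\in\E_{A/J}(B/J)$ (its value at $\infty$ being $p_\infty\cdot q_\infty=0$). Therefore $\E_{A/J}(B/J)=\mathcal M$ is closed, and by Lemma~\ref{str} so is $\E(A)$ in $\mathrm{ICB}(A)$.
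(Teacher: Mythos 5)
Your proposal is correct, and it follows the paper's strategy at the top level: reduce via Lemma~\ref{str} to the quotient $\dot{B}=B/J\cong C(\tilde{\N},\MM_2(\C))$, $\dot{A}=A/J$, describe $\E_{\dot{A}}(\dot{B})$ explicitly inside $\mathrm{ICB}(\dot{B})=\E(\dot{B})$ by closed linear conditions, and then prove the hard inclusion that every operator satisfying those conditions really is elementary with coefficients in $\dot{A}$. Your set $\mathcal{M}$ (Schur multiplier at each finite point, scalar multiple of the identity at $\infty$, shift compatibility of the $(2,2)$ and $(1,1)$ coefficients) coincides with the paper's normal form (\ref{form}). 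Where you genuinely diverge is in the hard inclusion. The paper subtracts $L(T)\,\mathrm{Id}$ and splits the remainder into a ``diagonal'' piece and two ``off-diagonal'' pieces $T_1,T_2,T_3$, each handled separately by factoring the coefficient function as $f=\varphi\psi$ with $\varphi,\psi\in C_0(\N)$ and choosing vectors supported according to a parity (resp.\ mod $3$) pattern; this yields an explicit representation with $2+3+3=8$ elementary tensors and no analysis beyond the factorization. You instead produce a single $3$-term representation by solving, for each $n$, the linear system $p_n\cdot q_{n-1}=z_{n-1}$, $p_n\cdot q_n=x_n$, $p_n\cdot q_{n+1}=y_n$ with a fixed ``frame'' $(q_n)$ chosen so that consecutive triples are well conditioned relative to the decay of the data: the computation $\det[q_{n-1};q_n;q_{n+1}]=\gamma_{n-1}\gamma_n$ is right, the $2\times 2$ minors are indeed $O(\gamma_{n-1})$ (for the $\alpha$--$\beta$ minor one writes $\alpha_i\beta_j-\alpha_j\beta_i=\alpha_i(\beta_j-\beta_i)-\beta_i(\alpha_j-\alpha_i)$ and uses boundedness of the partial sums), and the choice $\gamma_n=\sqrt{M_{n-1}}+2^{-n}$ gives $|p_n|=O(M_{n-1}/\gamma_n)=O(\sqrt{M_{n-1}})\to 0$, so the coefficient sequences converge and the value at $\infty$ is $0$ as required. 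Your route buys a sharper term count ($3$ instead of $8$) and a more systematic, quantitative mechanism that would adapt to other shift-type subalgebras; the paper's route buys complete explicitness and avoids any conditioning estimate. Two trivial points to tidy up: the system for $p_1$ refers to $q_0$ and $z_0$, so you should set $z_0:=0$ and $q_0:=(1,0,0)$ (the empty sums) and note the determinant formula still holds; and you should record explicitly that for $T\in\E(\dot{B})=C(\tilde{\N},\E(\MM_2))$ the Schur coefficients $x_n,y_n,z_n$ are automatically convergent (to the coefficient of $T(\infty)$), which is what makes the reduction to $\lambda=0$, $x,y,z\to 0$ legitimate.
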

\begin{proof}  Let
 $$J:=\{a \in A : \ a(n)=0, \ \mathrm{for} \ \mathrm{all} \  n \in \N\}$$ be the
$2$-homogeneous (Glimm) ideal of $A$.
 Then $J$ is an essential ideal of $A$ and $B$, and it follows from Lemma
\ref{str} that it is sufficient to show that $\E_{A/J}(B/J)$ is a closed
subspace of $\mathrm{ICB}(B/J)$ which is equal to $\E(B/J)$, by \cite[1.1]{Mag}.
Let
$$\dot{B}:= C(\tilde{\N,} \MM_2(\C)) \quad \mathrm{and} \quad \ \dot{A}:=\Big\{\left( \begin{array}{cc}
               f & 0 \\
              0 & \tilde{f}\\
               \end{array} \right): \ f \in C(\tilde{\N})\Big\},$$
where $\tilde{\N:}=\N \cup \{\infty\}$ denote the Alexandroff compactifcation of
$\N$, and for $f \in C(\tilde{\N})$, $\tilde{f}$ is a function defined with
$\tilde{f}(n):=f(n+1)$ $(n \in \N)$. Obviously $B/J \cong \dot{B}$ and $A/J
\cong \dot{A}$, and in the following, we shall identify this $C^*$-algebras. If
$(E_{i,j})_{1\leq i,j\leq 2}$ denote the standard matrix units of $\MM_2(\C)$ considered as constant elements of $\dot{B}$, we
claim that the set $\E_{\dot{A}}(\dot{B})$
can be identified with the set of all operators $T \in \E(\dot{B})$ which can be
written in the form
\begin{equation}\label{form}
T =f E_{1,1} \odot E_{1,1} + g E_{1,1} \odot E_{2,2}+ h E_{2,2} \odot E_{1,1} +
\tilde{f} E_{2,2} \odot E_{2,2}
\end{equation}
 where $f,g,h\in C(\tilde{\N})$ are functions such that
$$L(T):=f(\infty)=g(\infty)=h(\infty).$$ One can easily show that every $T \in
\E_{\dot{A}}(\dot{B})$ can be written in the form (\ref{form}). Conversely, if
$T\in \E(\dot{B})$ is of the form  (\ref{form}), then
\begin{eqnarray*}
T &=& (f-L(T)) E_{1,1} \odot E_{1,1} + (g -L(T))E_{1,1} \odot E_{2,2} \\
 &+&(h-L(T))
E_{2,2} \odot E_{1,1} + (\tilde{f}-L(T)) E_{2,2} \odot E_{2,2} +L(T)\mathrm{Id},
\end{eqnarray*}

where $\mathrm{Id}$ denotes the identity map on $\dot{B}$.
Hence, to prove that $T \in \E_{\dot{A}}(\dot{B})$, it is sufficient to prove that for
arbitrary functions $f,g,h \in C_0(\N)$ all operators $T_1,T_2$ and $T_3$ are
the elements of $\E_{\dot{A}}(\dot{B})$, where
$$T_1:=f E_{1,1} \odot E_{1,1} + \tilde{f}E_{2,2} \odot E_{2,2}, \quad T_2:=g
E_{1,1} \odot E_{2,2} \quad \mathrm{and} \quad  T_3:=h E_{2,2} \odot E_{1,1}.$$

\textit{Claim 1.} $T_1$ can be written in the form $$T_1 = a_1 \odot b_1 + a_2 \odot
b_2, \quad \mathrm{for} \ \mathrm{some}\  a_i,b_i \in \dot{A}.$$

To prove this, by looking at the entries of the corresponding decomposition of $T_1$, it is sufficient to find two sequences of vectors $(\vec{v}_n)$ and $(\vec{w}_n)$ in $\C^2$ such that
$\lim_n \vec{v}_n = \lim_n \vec{w}_n = (0,0)$, and
\begin{equation}\label{niz1}
\vec{v}_n \cdot \vec{w}^*_n = f(n), \quad \vec{v}_{n} \cdot
\vec{w}^*_{n+1}=\vec{v}_{n+1} \cdot \vec{w}^*_n=0, \quad \mathrm{for} \ \mathrm{all} \ n \in \N,
\end{equation}
where $\cdot$ denotes a standard inner product of $\C^2$, and for
$\vec{v}=(\alpha, \beta) \in \C^2$, $\vec{v}^*:=(\bar{\alpha}, \bar{\beta})$.
Let $\varphi,\psi \in C_0(\N)$ be any functions such that $f=\varphi \psi$. Then
we can achieve (\ref{niz1})
by putting $$\vec{v}_n=\Big([n+1] \varphi(n),
[n] \varphi(n)\Big ) \quad \mathrm{and} \quad  \vec{w}_n=\Big ([n+1]
\psi(n), [n]\psi(n) \Big) \ (n \in \N)$$
where $[n]=1$ if $n$ is even and $[n]=0$ if $n$ is odd.

\textit{Claim 2.} $T_2$ can be written in the form $$T_2 = a_1 \odot b_1 + a_2 \odot
b_2+ a_3 \odot b_3, \quad \mathrm{for} \ \mathrm{some}\  a_i,b_i \in
\dot{A}.$$

To prove this, like in the proof of Claim 1, it is sufficient to find two sequences of vectors
$(\vec{v}_n)$ and $(\vec{w}_n)$ in $\C^3$ such that
$\lim_n \vec{v}_n = \lim_n \vec{w}_n = (0,0,0)$, and
\begin{equation}\label{niz2}
\vec{v}_n \cdot \vec{w}^*_n = \vec{v}_{n+1} \cdot \vec{w}^*_n= 0, \quad
\vec{v}_{n} \cdot \vec{w}^*_{n+1}=g(n), \quad \mathrm{for} \ \mathrm{all} \ n \in \N.
\end{equation}
Let $\varphi,\psi \in C_0(\N)$ be any functions such that $g=\varphi \psi$. If $(\vec{e}_i)_{1\leq i \leq 3}$ denote the canonical basis of $\C^3$,
we can achieve (\ref{niz2}) by putting
$$\vec{v_n}= \varphi(n) \vec{e}_{\langle n \rangle}  \quad \mathrm{ and} \quad   \vec{w_i}= \psi(n-1) \vec{e}_{\langle n+2 \rangle} \ (n \in \N),$$
where $\psi(0):=1$, and for $n=3k+l$, $\langle n \rangle =l$ if $l=1,2$ and $\langle n \rangle=3$ if $l=0$.

\textit{Claim 3.} $T_3$ can be written in
the form $$T_3 = a_1 \odot b_1 + a_2 \odot b_2+ a_3 \odot b_3, \quad
\mathrm{for} \ \mathrm{some}\  a_i,b_i \in \dot{A}.$$

This can be proved like claim 2.

Using (\ref{form}) it is now easy to vertify that $\E_{\dot{A}}(\dot{B})$ is closed
in $\mathrm{ICB}(\dot{B})=\E(\dot{B})$.
\end{proof}
\begin{question} Does every unital $C^*$-algebra $A$ with
$\mathrm{Orc}(A)=\infty$ has an outer elementary derivation, or at least an
outer derivation from $\im \theta_A$?
\end{question}

Let $A$ be a separable $C^*$-algebra, and let $J \in \mathrm{Id}(A)$. By \cite[8.6.15]{Ped}
we know that each derivation $\dot{\delta} \in \mathrm{Der}(A/J)$ can be lifted
to the derivation $\delta \in \mathrm{Der}(A)$. Obviously, each operator
$\dot{T} \in \im \theta_{A/J}$ can also be lifted to an operator $T \in \im
\theta_A$. The next example shows that in general we cannot expect that a
derivation $\dot{\delta} \in \mathrm{Der}(A/J) \cap \im\theta_{A/J}$ has a lift
to a derivation $\delta \in \mathrm{Der}(A) \cap \im \theta_A$.

\begin{example} Let $A$ be the $C^*$-algebra from the Example \ref{orc} and
choose any faithful unital representation $ \pi : A \to  \BB(\mathcal{H})$ on a
separable Hilbert space $\mathcal{H}$ such that $\pi(A) \cap
\mathrm{K}(\mathcal{H})=\{0\}$, where $\mathrm{K}(\mathcal{H})$ denote the
$C^*$-algebra of all compact operators on $\mathcal{H}$. Let
$B:=\pi(A)+\mathrm{K}(\mathcal{H})$. Obviously $B$ is a unital, separable and primitive
$C^*$-algebra and hence, by Theorem \ref{prime}, we have $\mathrm{Der}(B) \cap \im
\theta_B = \mathrm{Inn}(B)$. On the other hand, since
$$B/\mathrm{K}(\mathcal{H}) \cong \pi(A)/( \pi(A)\cap \mathrm{K}(\mathcal{H}))
\cong \pi(A) \cong A,$$
by Example \ref{orc}  there exists an outer derivation $\dot{\delta} \in \im
\theta_{B/\mathrm{K}(\mathcal{H})}$. It follows that such derivation cannot be lifted
to the (necessarily inner) derivation $\delta \in \im \theta_B$.

\end{example}

\end{document}